\newtheorem{thm}{Theorem}[section]
\newtheorem{lem}[thm]{Lemma}
\newtheorem{cor}[thm]{Corollary}
\newtheorem*{thmS}{Straightening Theorem}
\newtheorem*{thma}{Theorem A}
\newtheorem*{thmb}{Theorem B}
\theoremstyle{definition}
\newtheorem{defn}[thm]{Definition}
\newcommand{\uc}{\mathbb{S}}
\newcommand{\Ac}{\mathcal{A}}
\newcommand{\al}{\alpha}
\newcommand{\toh}{\mathrm{TH}}
\newcommand{\imp}{\mathrm{Imp}}
\newcommand{\be}{\beta}
\newcommand{\ga}{\gamma}
\newcommand{\ta}{\theta}
\newcommand{\0}{\emptyset}
\newcommand{\sm}{\setminus}
\newcommand{\vp}{\varphi}
\newcommand{\ol}{\overline}
\newcommand{\D}{\mathbb{D}}
\newcommand{\iD}{\D^\infty}
\newcommand{\C}{\mathbb{C}}
\newcommand{\si}{\sigma}
\newcommand{\e}{\varepsilon}
\newcommand{\disk}{\mathbb{D}}
\newcommand{\diam}{\mathrm{diam}}
\begin{document}

\title[Locally Connected Models]{Non-degenerate locally connected models
for plane continua and Julia sets}
\author{}
\date{May 9, 2016}
\author[A.~Blokh]{Alexander~Blokh}

\author[L.~Oversteegen]{Lex Oversteegen}

\author[V.~Timorin]{Vladlen~Timorin}

\address[Alexander~Blokh and Lex~Oversteegen]
{Department of Mathematics\\ University of Alabama at Birmingham\\
Birmingham, AL 35294-1170}

\address[Vladlen~Timorin]
{Faculty of Mathematics\\
Laboratory of Algebraic Geometry and its Applications\\
Higher School of Economics\\
Vavilova St. 7, 112312 Moscow, Russia }

\address[Vladlen~Timorin]
{Independent University of Moscow\\
Bolshoy Vlasyevskiy Pereulok 11, 119002 Moscow, Russia}

\email[Alexander~Blokh]{ablokh@math.uab.edu}
\email[Lex~Oversteegen]{overstee@math.uab.edu}
\email[Vladlen~Timorin]{vtimorin@hse.ru}

\subjclass[2010]{Primary 37F10, 37F20; Secondary 37F50, 54C10, 54F15}

\keywords{Complex dynamics; Julia set; polynomial-like maps;
laminations}

\begin{abstract}
Suppose that a $X$ is an \emph{unshielded} plane continuum (i.e., $X$
coincides with the boundary of the unbounded complementary component of
$X$). Then there exists a \emph{finest monotone} map $m:X\to L$, where
$L$ is a locally connected continuum (i.e., $m^{-1}(y)$ is connected
for each $y\in L$, and any monotone map $\vp:X\to L'$ onto a locally
connected continuum is a composition $\vp=\vp'\circ m$ where $\vp':L\to
L'$ is monotone). Such finest locally connected model $L$ of $X$ is
easier to understand because $L$ is locally connected (in particular it
can be described by a picture) and represents the finest but still
understandable decomposition of $X$ into possibly complicated but
pairwise disjoint \emph{fibers} (point-preimages) of $m$. However, in
some cases (i.e., in  case $X$ is indecomposable) $L$ is a singleton. In this
paper we provide sufficient conditions for the existence of a
non-degenerate model depending on the existence of certain subcontinua
of $X$ and apply these results to the connected Julia sets of
polynomials.
\end{abstract}

\maketitle

\section{Introduction}

A natural approach to studying a topological space $X$ is to model
$X$ using simpler and easier to deal with spaces. By this we mean
finding a factor space of $X$ such that both the quotient map
$m:X\to L$ and the model space $L$ are manageable. In this paper we
consider only \emph{plane continua}; in that setting we view
\emph{monotone} maps and \emph{locally connected continua} as
manageable. This leads to the concept of the \emph{finest locally
connected model under a monotone map} of a plane continuum
$X$.

The concept was inspired by Jan Kiwi who approached the problem of
modeling from the point of view of (complex) dynamical systems. To
state Kiwi's results we need a few definitions.

\begin{defn}[Semiconjugacy of maps]\label{d:semi}
Two maps $f:X\to X$ and $g:Y\to Y$ are said to be \emph{semiconjugate}
if there exists a map $\psi:X\to Y$ such that $\psi\circ f=g\circ
\psi$. In other words, the following diagram is commutative:


\begin{center}

\begin{tikzpicture}
  \matrix (m) [matrix of math nodes,row sep=3em,column sep=4em,minimum width=2em]
  {
     X & X \\
     Y & Y \\};
  \path[-stealth]
    (m-1-1) edge node [left] {$\psi$} (m-2-1)
            edge node [below] {$f$} (m-1-2)
    (m-2-1.east|-m-2-2) edge node [below] {$g$}
            (m-2-2)
    (m-1-2) edge node [right] {$\psi$} (m-2-2);
\end{tikzpicture}

\end{center}


\end{defn}




We also need to define a concept of a \emph{monotone map}.

\begin{defn}[Monotone map]\label{d:monotone}
A map $f:X\to Y$ is \emph{monotone} provided for each $y\in Y$,
$f^{-1}(y)$ is connected. \end{defn}

In what follows let $\C$ be the complex plane and let $\widehat \C$ be the complex sphere.
In his paper \cite{kiw04} Kiwi proves that if a polynomial $P$ with
connected Julia set $J(P)$ has no periodic points with multipliers
which are complex numbers of modulus $1$ and irrational argument then
$P$ can be semiconjugate to a so-called \emph{topological polynomial}
$f_P:\C\to \C$. The semiconjugacy $\vp:\C\to \C$ is a monotone map
which is one-to-one outside the Julia set $J(P)$; thus, basically $\vp$
collapses some subcontinua of $J(P)$ (\emph{fibers} of $\vp$) to
points. The topological polynomial $f_P$ is a branched covering map
such that $\vp(J(P))$ is a locally connected continuum with
well-understood structure and dynamics described by so-called
\emph{laminations}.

As mentioned above, Kiwi's approach to the problem was based upon
dynamical systems' considerations. Later on in \cite{bco11} it was
discovered that an approach based upon continuum theory yields results
that extend those of \cite{kiw04} while also being applicable in a
purely topological setting. We need a few definitions.

\begin{defn}\label{d:finest}
Let $X$ be a continuum. A continuum $Y$ is a \emph{finest locally
connected model for $X$} if there exists a monotone map $m:X\to Y$ so
that for any monotone map $f:X\to Z$, where $Z$ is a locally connected
continuum, there exists a monotone map $g:Y\to Z$ so that $g\circ m=f$;
then we will call the map $m$ a \emph{finest monotone map}.
\end{defn}

We consider this notion on the plane in the context of so-called \emph{unshielded} continua.

\begin{defn}\label{d:unshield}
Given a compact set $X$ in the plane, let $U_X^\infty$  denote the
unbounded complementary domain of $X$. The set $\toh(X)=\C\sm
U_X^\infty$ is called the \emph{topological hull} of $X$. A compact set
$X$ in the plane is \emph{unshielded} provided $X$ coincides with the
boundary $\partial(U_X^\infty)$ of the unbounded complementary domain
$U_X^\infty$ of $X$. Observe that any subcontinuum of an unshielded
continuum is unshielded.
\end{defn}

The following theorem shows that a finest locally connected model and
a finest monotone map are well-defined for unshielded plane continua (in
\cite{bco13} the result was extended to plane compacta).

\begin{thm}[\cite{bco11}]\label{t:models}
Every unshielded plane continuum $X$ has a finest locally connected
model $Y$ and a finest monotone map $m$. Moreover, any two finest
locally connected models of an unshielded continuum $X$ are
homeomorphic. Furthermore, $m$ can be extended to a monotone map $\widehat \C\to
\widehat \C$ which maps $\infty$ to $\infty$, in $\C\sm X$ collapses
only those complementary domains to $X$ whose boundaries are collapsed
by $m$, and is a homeomorphism elsewhere in $\widehat \C\sm X$.
\end{thm}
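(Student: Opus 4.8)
The plan is to construct the finest monotone map $m$ as a quotient by the smallest closed equivalence relation on $X$ whose quotient is locally connected, and to verify that this quotient behaves well on the plane. First I would consider the family $\mathcal{M}$ of all monotone maps $f\colon X\to Z$ onto locally connected continua. This family is nonempty since the constant map qualifies. The key preliminary observation is that there is a natural partial order on $\mathcal M$ (refinement: $f\preceq f'$ if $f$ factors through $f'$ by a monotone map), and one wants to show this ordered family has a maximal element, which will be $m$. The standard device is to work with the associated upper-semicontinuous decompositions of $X$ into continua, take the common refinement of all such "locally connected decompositions," and show that this common refinement is again upper semicontinuous with locally connected quotient. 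Here I would invoke the theory of monotone-light factorization and the fact that an inverse limit of locally connected continua under monotone bonding maps need not be locally connected, so the argument cannot be a naive Zorn's lemma application; instead one shows directly that the intersection (as decompositions) of all elements of $\mathcal M$ is itself in $\mathcal M$.

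The second step is to establish uniqueness up to homeomorphism. Given two finest locally connected models $m\colon X\to Y$ and $m'\colon X\to Y'$, the defining universal property applied in both directions produces monotone maps $g\colon Y\to Y'$ and $g'\colon Y'\to Y$ with $g\circ m = m'$ and $g'\circ m' = m$; hence $g'\circ g\circ m = m$ and $g\circ g'\circ m' = m'$. Since $m$ and $m'$ are surjective, $g'\circ g=\mathrm{id}_Y$ and $g\circ g'=\mathrm{id}_{Y'}$, so $g$ is a homeomorphism. This part is routine category-theoretic bookkeeping and I expect no difficulty.

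The third and genuinely planar step is the extension of $m$ to a monotone map $\widehat\C\to\widehat\C$ with the stated properties. Using unshieldedness, $X=\partial U_X^\infty$, and the fibers of $m$ are subcontinua of $X$, hence of the unshielded continuum, so each fiber is itself unshielded and in particular has connected, simply connected topological hull (a non-separating plane continuum). I would extend the decomposition of $X$ to a decomposition of $\widehat\C$ by: (i) taking topological hulls of those fibers that are non-degenerate, and (ii) additionally collapsing precisely those bounded complementary domains $U$ of $X$ whose boundary $\partial U$ is collapsed by $m$ to a point (equivalently, is contained in a single fiber) — filling in $U\cup\partial U$. One must check this enlarged decomposition of $\widehat\C$ is still upper semicontinuous and that the elements are non-separating continua (hulls of unshielded continua), so Moore's theorem gives a quotient homeomorphic to $\widehat\C$; the induced map is the desired extension, it fixes $\infty$, it is one-to-one on the part of $\widehat\C\sm X$ that is not collapsed, and its restriction to $X$ is $m$ because hulls of fibers meet $X$ exactly in those fibers (unshieldedness again). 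The main obstacle will be this last step: proving upper semicontinuity of the enlarged planar decomposition and, in particular, ruling out the possibility that hull-filling creates overlaps or destroys the non-separating property — one must show that the hull of a fiber meets $X$ in exactly that fiber and does not swallow extra points of $X$, which is where the hypothesis that $X$ is unshielded (so every subcontinuum is unshielded and non-separating) is used crucially. I would cite \cite{bco11} for the detailed verification, since this is precisely the content of that paper.
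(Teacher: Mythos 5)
The paper does not actually prove this statement: Theorem~\ref{t:models} is imported verbatim from \cite{bco11}, so there is no internal proof to compare yours against. Judged on its own, your sketch handles the routine parts correctly (uniqueness via the universal property is exactly the standard argument, and extending over $\widehat\C$ by filling in topological hulls of fibers together with the collapsed complementary domains and then applying Moore's theorem is in the spirit of how \cite{bco11} treats the sphere). But the central existence step has a genuine gap. You propose to take the ``common refinement'' of the decompositions induced by all monotone maps of $X$ onto locally connected continua and assert that ``one shows directly'' that this refinement again lies in the class $\mathcal M$. That assertion is the entire difficulty: the intersection of fibers of two such maps need not be connected, so it is not even clear which decomposition you are forming, let alone that it is upper semicontinuous, has connected elements, and has a locally connected quotient. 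You rightly note that Zorn's lemma is useless here, but you do not supply the mechanism that replaces it.

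The actual construction in \cite{bco11} is different in kind: it works from the circle at infinity. One first shows that any monotone map of an unshielded continuum onto a locally connected continuum must collapse every impression $\imp_X(\al)$ to a point (this is Lemma~\ref{l:fib-in-imp} of the present paper), then builds the finest closed equivalence relation on $\uc$ with unlinked classes that identifies angles whose impressions meet; the quotient is locally connected by Moore-type results on laminations, and the factorization property holds because every competing map is forced to collapse the same impressions. In particular, unshieldedness is essential already for existence --- it is what makes impressions cover $X$ and constrains the fibers of every candidate map --- and not only for the planar extension step, which is the only place you invoke it. As written, your proposal would need either to import this impression-based construction or to give an independent proof that $\mathcal M$ is closed under common refinement, and the latter does not follow from soft arguments.
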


By \cref{t:models} we can talk about \emph{the} finest locally
connected model of an unshielded continuum and \emph{the} finest
monotone map. It follows that if an unshielded plane continuum $X$ has
the finest locally connected model which is non-degenerate then its
topological hull $\toh(X)$ also has a non-degenerate model.

In particular, the connected Julia set of a polynomial admits a finest
locally connected model. However, in some cases the finest locally
connected model is a single point; in this case we say that the finest
locally connected model is \emph{degenerate} while otherwise we call
such model \emph{non-degenerate}. Obviously, if the finest model is
degenerate, all information regarding the continuum $X$ is lost while
otherwise some of the structure of $X$ is preserved in its model. This
shows the importance of the fact that the finest locally connected
model of an unshielded continuum is non-degenerate. In the present
paper we will study conditions under which the finest locally connected
model is non-degenerate. Moreover, in the final section we apply this
result to polynomial dynamics.

\section{Statement of main results and applications}

In this section we assume knowledge of basic concepts of continuum
theory and complex dynamics (all necessary definitions are given in
detail later in the sections of the paper containing the proofs of our
main results). Denote the open unit disk by $\D$ and the disk at infinity (i.e.,
$\mathbb{C}\sm \ol{\D}$) by $\iD$. We will identify the unit circle
$\uc=\partial \D=\partial \iD$ with $\mathbb R/\mathbb Z$ and call
the induced order on $\uc$ the \emph{circular order}. Note that the
circular order is not defined for a pair of points in $\uc$, but if
$x,y,z\in \uc$ are three distinct points, then $x<y<z$ in the
circular order if, when traveling from $x$ in the positive direction
along $\uc$, we encounter $y$ before $z$. Thus, from now on a single
point $x\in \uc$ will be denoted by the corresponding angle, i.e. by
a number $\al\in [0,1)$ with  $x=e^{2\pi\alpha i}$.

If $X$ is a plane continuum, then by the Riemann mapping theorem there
exists a conformal map $\psi_X:\iD\to U_X^\infty$ with derivative
converging to a real number as $|z|\to\infty$. \emph{External rays} of
$X$ foliate $U_X^\infty$ and serve as a major tool in studying the topology
of $\partial(U_X^\infty)$.

\begin{defn}[External rays]\label{d:exteray}
Let $X$ be a plane continuum. By an \emph{external ray of $X$} we mean
the image of the radial line segment with argument $2\pi\al$ under the
Riemann map $\psi_X$; in what follows, this image will be denoted by $R_X(\al)$.
In other words,
\[R_X(\alpha)=\psi_X(\{r\,e^{2\pi \alpha i}\mid r>1\}).\]
If we do not want to emphasize the argument we denote an external ray
of $X$ by $R_X$. We say that the external ray $R_X(\al)$ \emph{lands on $x_\al\in X$} provided
$\ol{R_X(\al)}\sm R_X(\al)=x_\al$.
\end{defn}

We will mostly consider external rays for unshielded plane continua $X$
(in that case $X=\partial(U_X^\infty)$) such as connected Julia sets of
complex polynomials, however sometimes we work with external rays of
other plane continua (e.g., we consider external rays of connected
\emph{filled} Julia sets). Observe that the unbounded complementary
domain $U^\infty_X$ of a continuum $X$ coincides with the unbounded
complementary domain $U^\infty_{\toh(X)}$ of its topological hull.
Therefore we can (and will) interchangeably talk about external rays of
$X$ and/or external rays of $\toh(X)$.

\begin{defn}[Strategically placed subcontinua]\label{d:strapla}
Suppose that $Y$ is a subcontinuum of an unshielded continuum $X$ in the complex
plane. Then we say that $Y$ is \emph{strategically placed in $X$}
provided that there exists a dense set $\Ac(Y, X)=\Ac\subset \uc$ so that:
\begin{enumerate}
\item for each $\al\in \Ac$, $R_Y(\al)$ lands on a point $y_\al\in Y$,
\item the set of points $\{y_\al\}_{\al\in\Ac}$ is dense in $Y$,
\item there exists a circle order preserving function $p:\Ac\to\uc$ so that for each $\al\in\Ac$
the ray $R_X(p(\al))$ lands on $y_\al$.
\end{enumerate}
In this case we say that $\Ac$ is an \emph{anchor set (of $Y$)} and $p:\Ac\to\uc$ is an \emph{external connecting function (of $Y$)}.
\end{defn}

Since $p$ preserves order, $p$ is one-to-one but  we do not assume that
$p$ is continuous.

Theorem A is our main continuum theory result. It shows that in some cases
the fact that a subcontinuum has a non-degenerate finest locally connected model implies
that the same can be said about the continuum itself.

\begin{thma}
Let $X$ be an unshielded plane continuum. If  $Y$ is
strategically placed in $X$, and $Y$ has a non-degenerate finest locally
connected model, then $X$ has a non-degenerate finest locally connected model.
\end{thma}

The main applications of this result are in complex dynamics. Namely,
the following theorem holds.

\begin{thmb}
Suppose that $P:\C\to \C$ is a polynomial and  $J^*\subset J(P)$  is a
continuum which is a polynomial-like Julia set of $P^n$
for some $n>0$. If $J^*$ has a non-degenerate finest locally connected
model, then so does $J(P)$.
\end{thmb}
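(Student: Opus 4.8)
The plan is to deduce Theorem B from Theorem A by showing that a polynomial-like Julia set $J^*$ of $P^n$ is automatically strategically placed in $J(P)$. Once that is established, Theorem A gives the conclusion immediately, since by hypothesis $J^*$ has a non-degenerate finest locally connected model. So the entire content of the proof is the verification of the three conditions in Definition~\ref{d:strapla} with $Y = J^*$ and $X = J(P)$.

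\textbf{Setting up the external rays.} First I would recall the standard picture: a polynomial-like restriction $P^n : V' \to V$ with filled Julia set $K^* = \toh(J^*)$ is conformally conjugate (by the Douady--Hubbard straightening theorem) to a genuine polynomial of the same degree, whose Julia set is connected. Hence $J^*$ itself is an unshielded continuum for which external rays $R_{J^*}(\al)$ are defined, and by the Riemann map / Carathéodory theory applied to the straightened model, the set of angles $\al$ for which $R_{J^*}(\al)$ lands is dense in $\uc$, and the corresponding landing points are dense in $J^*$ (rational, or at least a residual set of, angles land). This will furnish the candidate anchor set $\Ac = \Ac(J^*, J(P))$ together with conditions (1) and (2) of Definition~\ref{d:strapla} --- possibly after intersecting with the (dense) set of angles for which the construction below also works.

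\textbf{Producing the external connecting function $p$.} This is the heart of the matter. Given $\al \in \Ac$ with $R_{J^*}(\al)$ landing at $y_\al \in J^*$, I need to find an external ray $R_{J(P)}(p(\al))$ of the full Julia set landing at the same point $y_\al$, and to arrange that $\al \mapsto p(\al)$ respects circular order. The idea is to use the structure of the escaping dynamics: external rays of $J(P)$ that pass close to $K^*$ accumulate on $\partial K^*$, and the conformal conjugacy from the straightening theorem carries the Böttcher coordinate near $\infty$ for the model polynomial to a coordinate on $V \sm K^*$, matching up the two families of rays outside a neighborhood of $K^*$. More precisely, one shows that for a dense set of angles $\al$, the external ray $R_{J^*}(\al)$ is ``shadowed'' by an external ray of $J(P)$ that lands at the same point of $J^*$; this uses that landing points of rational rays (or rays landing at (pre)periodic or ``accessible'' points) are preserved under the conjugacy, and that such points are dense in $\partial K^*$. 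The order-preservation of $p$ follows because both the rays of $J^*$ and the corresponding rays of $J(P)$ approach $K^*$ from the unbounded complementary domain $U^\infty = U^\infty_{J^*} = U^\infty_{J(P)} \cap (\text{region near } K^*)$ in a way compatible with the cyclic order at $\infty$: the ``fatter'' Riemann map for $J(P)$, restricted near $K^*$, sees the rays of $J^*$ in their correct circular order, and the prime-end correspondence is orientation-preserving.

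\textbf{Expected main obstacle.} The technical crux is exactly the construction and order-compatibility of $p$ in the preceding paragraph: one must pass from the internal (polynomial-like) external rays of $J^*$ to genuine external rays of $J(P)$ and track how they sit cyclically around $J(P)$, even though only $J^*$ --- not all of $J(P)$ --- is assumed to have nice landing behaviour. I would handle this by working with a thickened neighborhood $W \supset K^*$ whose boundary is a Jordan curve meeting $\partial K^*$ in no arc, identifying the arc of $\uc$ (in the prime-end circle of $U^\infty_{J(P)}$) ``subtended by $K^*$'' with the prime-end circle of $J^*$, and checking that landing points of rational rays match up under straightening --- this last point is where I expect to cite or reprove a lemma about conformal conjugacies preserving accessibility and landing. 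If desired, one can restrict $\Ac$ to (pre)periodic angles to make the landing statements cleanest, since those still have dense landing points in $J^*$. Once $p$ is built, conditions (1)--(3) are all in hand, $J^*$ is strategically placed in $J(P)$, and Theorem~A finishes the proof.
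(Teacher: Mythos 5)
Your top-level strategy is exactly the paper's: verify that $J^*$ is strategically placed in $J(P)$ and then invoke Theorem A. The difference lies in how the external connecting function is produced, and it is worth spelling out because the paper deliberately goes the other way around. You try to build $p$ in the forward direction: start from an angle $\al$ with $R_{J^*}(\al)$ landing at $y_\al$ and then find a ray of $J(P)$ landing at the same point. This runs into the accessibility asymmetry $U^\infty_{J(P)}\subset U^\infty_{J^*}$: a point of $J^*$ accessible from the complement of $K^*$ need not be accessible from the complement of $K(P)$, so for a general landing angle of $J^*$ there may simply be no ray of $J(P)$ landing at $y_\al$. The paper avoids this entirely by first constructing the \emph{inverse} map $\nu:\mathcal B\to\uc$ (\cref{l:weakmonotone}), where $\mathcal B$ is the set of \emph{all} angles $\be$ such that $R_{J(P)}(\be)$ lands in $J^*$; for such $\be$ the landing point is accessible from both domains by fiat, and $\nu(\be)$ is read off by pushing $R_{J(P)}(\be)$ through the Riemann map of $U^\infty_{J^*}$ and taking the radial ray with the same endpoint on $\uc$ --- this simultaneously delivers the homotopy condition of \cref{homotope}(2), which is what actually certifies strategic placement and settles the order-preservation you argue for via prime ends. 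The anchor set is then $\Ac=\nu(\mathcal B)$, with density obtained dynamically by pulling back a single preperiodic ray pair under inverse branches of the polynomial-like map.

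Your proposed patch --- restricting to (pre)periodic angles, where repelling and parabolic (pre)periodic points are landing points of rational rays of $J(P)$ --- does repair the accessibility gap and would make your forward construction work, so the proposal is salvageable essentially as written. Two points you should still make explicit if you pursue your route: (i) when several rays of $J(P)$ land at the same (pre)periodic point $y_\al$, you must select the one homotopic to $R_{J^*}(\al)$ in $\C\sm J^*$ rel $y_\al$ (otherwise order preservation can fail); and (ii) the density of the resulting landing points in $J^*$ should be justified by the pullback argument rather than by a generic appeal to Carath\'eodory theory, since density of landing angles for the straightened polynomial does not by itself place the corresponding points densely in $J^*$ with matching $J(P)$-rays.
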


In Subsection~\ref{ss:non-conn} we rely upon\, \cite{bco13} and prove a
version of Theorem B for disconnected Julia sets.

\section{Proof of Theorem A}

In the first subsection of this section we give various standard definitions.
 Then we prove Theorem A.

\subsection{Basic definitions}

The notion of the \emph{principal set} is used in studying the limit
behavior external rays.

\begin{defn}[Principal set]\label{d:princi}
Given an external ray $R_X(\al)$ of an unshielded continuum $X$ we denote by
$\Pr_X(\al)$ the set $\ol{R_X(\al)}\setminus R_X(\al)$ and call it the
\emph{principal set of the ray $R_X(\al)$.} If $\Pr_X(\al)$ is a single
point $y$ we say that the external ray $R_X(\al)$ \emph{lands on $y$.}

More generally, let $T\subset U_X^\infty$ be an image of $\mathbb
R_+=(0, \infty)$ under a continuous map $\psi:\mathbb R_+\to \C$ such
that $\lim_{t\to \infty}|\psi(t)|=\infty$ while $\0\ne \ol{T}\setminus
T\subset X$. Then we say that $T=\psi(\mathbb R_+)$ \emph{accumulates}
in $X$, denote $\ol{T}\setminus T$ by $\Pr_X(T)$ and call it the
\emph{principal set of the curve $T$ which accumulates in $X$.} If
$\Pr_X(T)$ is a single point $y$ we say that the curve $T$ which
accumulates in $X$ \emph{lands on $y$.}
\end{defn}

Another important definition is that of a \emph{crosscut} (see, e.g.,
\cite{mil06} for details).

\begin{defn}[Crosscuts]\label{d:crosscut}
A \emph{crosscut $C$} of $X$ is an open  arc $C\subset U_X^\infty$ so
that its closure is a closed arc with two distinct endpoints both of
which belong to $X$. A \emph{fundamental chain $\{C_i\}$ (of
crosscuts)}  is a sequence of crosscuts $C_i$ of $X$  such that the
following holds:

\begin{enumerate}

\item $\ol{C_i}\cap \ol{C_j}=\0$ if $i\ne j$,

\item for each $i$, $C_i$ separates $C_{i+1}$ from infinity in
    $U_X^\infty$, and

\item $\lim \diam (C_i)=0$.

\end{enumerate}

For each crosscut $C$ of $X$ its \emph{shadow $S_C$} is the closure of
the bounded complementary domain of $\mathbb C\sm [X\cup C]$ whose
boundary contains $C$.
\end{defn}

Note that every fundamental chain $\{C_i\}$ corresponds to a unique
point $\al\in\uc$ defined by $\lim (\varphi_X)^{-1}(C_i)=\al$ and in
this case we say that $\{C_i\}$ is a \emph{fundamental chain for
$\al$}.

\begin{defn}[Impressions]\label{d:impress}
The \emph{($X$-)impression} $\imp_X(\al)$ is defined as
\[\imp_X(\al)=\bigcap S_{C_i}, \text{ where $\{C_i\}$ is a fundamental chain for } \al.\]
\end{defn}

It is easy to see that both $\Pr_X(\al)$ and $\imp_X(\al)$ are
continua, that $\Pr_X(\al)\subset \imp_X(\al)$ and that $\imp_X(\al)$
is independent of the choice of the fundamental chain for $\al$.
Moreover, let $X$ be an unshielded continuum. Then, even though
$\bigcup_\al \Pr_X(\al)$ can be a proper subset of the continuum $X$,
$\bigcup_\al \imp_X(\al)=X$.

\subsection{Proof of Theorem A}
Let us recall that the notion of a subcontinuum $Y$ strategically
placed in an unshielded continuum $X$ was introduced in
\cref{d:strapla}. A part of this definition is a function $p$
(so-called \emph{external connecting function}) of a dense set
$\Ac\subset \uc$ to $\uc$ which preserves circle order and maps angles
such that for each $\al\in \Ac$, both the ray $R_Y(\al)$ and the ray
$R_X(p(\al))$ land on a point $y_\al\in Y$. We will show below that the
choice of the function $p$ is severely restricted. Moreover, the
condition in Lemma~\ref{homotope} characterizes the situation in which
a subcontinuum is strategically placed in an unshielded continuum (and so this
characteristic can be used as an alternative definition of the fact
that $Y$ is strategically placed in $X$).

\begin{lem}\label{homotope}
Suppose that $Y\subset X$ are unshielded planar continua. 
Then the following are equivalent:
\begin{enumerate}
\item $Y$
is strategically placed in $X$ with anchor set $\Ac$,
\item \label{ht}  There exists a dense set $\Ac\subset \uc$ so that
    for $\al\in\mathcal A$, there exists $\be(\al)=\be\in\uc$ so that
    the ray $R_X(\be)$ also lands on $y_\al$ and the rays $R_Y(\al)$
    and $R_X(\be)$ are homotopic in $\{y_\al\}\cup \C\sm Y$ under a
    homotopy which fixes the landing point $y_\al\in Y$.
\end{enumerate}
\end{lem}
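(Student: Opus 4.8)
The plan is to prove both implications by translating ``landing of an external ray'' into a statement about curves in $\C\setminus Y$ and using the (conformal) structure of $U_Y^\infty$ and $U_X^\infty$. Throughout we use the fact from Definition \ref{d:unshield} that subcontinua of unshielded continua are unshielded, so $Y=\partial(U_Y^\infty)$ and $X=\partial(U_X^\infty)$, and the containment $Y\subset X$ gives $U_X^\infty\subset U_Y^\infty$. The heart of the argument is that an external ray $R_X(\beta)$ of $X$ is a curve in $U_X^\infty\subset U_Y^\infty$ that accumulates in $X$; if its landing point lies in $Y$, then $R_X(\beta)$ is a curve accumulating in $Y$ in the sense of Definition \ref{d:princi}, and hence (after forgetting its parametrization) it is homotopic rel endpoint, inside $\{y_\alpha\}\cup(\C\setminus Y)$, to a unique external ray $R_Y(\alpha)$ of $Y$ landing at the same point. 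This last statement is the technical core: two curves in the simply connected domain $U_Y^\infty$ that land at the same point of $\partial U_Y^\infty$ and are ``not separated'' by $Y$ are homotopic there, and the external ray is the canonical representative of its homotopy class determined by the prime end it defines.

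First I would establish the direction \emph{(1) $\Rightarrow$ (2)}. Assume $Y$ is strategically placed with anchor set $\Ac$ and external connecting function $p:\Ac\to\uc$. Fix $\alpha\in\Ac$; by Definition \ref{d:strapla} both $R_Y(\alpha)$ and $R_X(p(\alpha))$ land on $y_\alpha\in Y$. Set $\beta=\beta(\alpha)=p(\alpha)$. Since $R_X(p(\alpha))\subset U_X^\infty\subset U_Y^\infty$ and it lands at $y_\alpha\in Y=\partial U_Y^\infty$, it defines a prime end of $U_Y^\infty$; I would argue this prime end is exactly the one whose impression/principal point carries $R_Y(\alpha)$, because both rays land at $y_\alpha$ and $Y$ is unshielded (so a single landing point of an accessible-from-infinity curve determines the prime end, via the Riemann map $\psi_Y$ and Lindel\"of's theorem). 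Pulling back by $\psi_Y^{-1}$, the curves $\psi_Y^{-1}(R_Y(\alpha))$ and $\psi_Y^{-1}(R_X(p(\alpha)))$ both land at the same point $e^{2\pi\alpha i}$ of $\uc$, and in the disk any two such curves are homotopic rel that boundary point; pushing the homotopy forward by $\psi_Y$ (which is a homeomorphism onto $U_Y^\infty$, extended to fix $y_\alpha$) gives the required homotopy in $\{y_\alpha\}\cup(\C\setminus Y)$. This yields (2) with the \emph{same} dense set $\Ac$.

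For \emph{(2) $\Rightarrow$ (1)} I would run the argument in reverse and additionally check the circular-order condition. Given the dense set $\Ac$ from (2), for each $\alpha\in\Ac$ the homotopy in $\{y_\alpha\}\cup(\C\setminus Y)$ joining $R_Y(\alpha)$ to $R_X(\beta(\alpha))$ shows both curves accumulate only at $y_\alpha$, so $R_Y(\alpha)$ lands on $y_\alpha\in Y$ (giving Definition \ref{d:strapla}(1)) and $\{y_\alpha\}_{\alpha\in\Ac}$ is dense in $Y$ because $\Ac$ is dense in $\uc$ and $\alpha\mapsto y_\alpha$ has dense image by the standard fact that radial limits exist on a dense (indeed full-measure) set and landing points are dense in $Y=\partial U_Y^\infty$ --- this is where I would be most careful, since density of $\{y_\alpha\}$ in $Y$ needs the density of $\Ac$ to be ``visible'' through $\psi_Y$; one extracts it from the fact that prime-end impressions cover $Y$ together with $\Ac$ dense. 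Finally, setting $p(\alpha)=\beta(\alpha)$, I must show $p$ preserves circular order. This is the genuine obstacle: I would argue that the homotopies, together with the fact that distinct external rays $R_X(\beta),R_X(\beta')$ of $X$ are disjoint and ordered by their angles, force $p$ to respect cyclic order --- concretely, if $\alpha<\alpha'<\alpha''$ in $\uc$ but $p$ reversed their order, the rays $R_X(p(\alpha)),R_X(p(\alpha')),R_X(p(\alpha''))$ together with the non-crossing homotoped copies of $R_Y(\alpha),R_Y(\alpha'),R_Y(\alpha'')$ in $U_Y^\infty$ would produce a crossing of disjoint arcs in the plane, a contradiction via a planarity/Jordan-curve argument. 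Establishing this crossing impossibility rigorously, i.e. that the cyclic order of landing rays of $X$ at points of $Y$ matches the cyclic order of the corresponding rays of $Y$, is the step I expect to require the most care, and it is precisely what makes $p$ an ``external connecting function'' in the sense of Definition \ref{d:strapla}.
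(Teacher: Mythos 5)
Your direction (2) $\Rightarrow$ (1) is essentially the paper's argument: set $p(\al)=\be(\al)$ and verify that $p$ preserves circular order by a non-crossing argument for disjoint arcs in $U_Y^\infty$ (the paper implements this by pulling the rays $R_X(\be_j)$ back under $\psi_Y^{-1}$ and intersecting them with the curves $\psi_Y^{-1}\circ\psi_X(\{z\mid |z|=r\})$ as $r\searrow 1$). The problem is in (1) $\Rightarrow$ (2). Your key step there rests on the claim that ``a single landing point of an accessible-from-infinity curve determines the prime end,'' and that claim is false: a point $y_\al\in Y$ may be the landing point of several distinct external rays of $Y$ (already for $Y=[0,1]$ every interior point of the arc is the landing point of two rays, corresponding to two distinct prime ends with the same principal point). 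Lindel\"of-type arguments only give you that $R_X(p(\al))$, viewed as a curve in $U_Y^\infty$ accumulating at $y_\al$, is homotopic rel $y_\al$ to \emph{some} external ray $R_Y(\be)$ of $Y$ landing on $y_\al$; nothing so far forces $\be=\al$, since $R_X(p(\al))$ could approach $y_\al$ through a different access to $Y$ than $R_Y(\al)$ does.

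Ruling out $\be\ne\al$ is the actual content of this direction, and it is exactly where condition (3) of Definition~\ref{d:strapla} (the circular-order preservation of $p$) must be used --- your proposal never invokes it in this implication. The paper's argument: if $\be\ne\al$, both components of $\C\sm\ol{R_Y(\al)\cup R_Y(\be)}$ meet $Y$; by density of $\Ac$ choose $\ga_1,\ga_2\in\Ac$ with $\al<\ga_1<\be<\ga_2<\al$ whose rays $R_Y(\ga_1)$, $R_Y(\ga_2)$ land in the two different components $C_1$, $C_2$. Order preservation gives $p(\al)<p(\ga_1)<p(\ga_2)$, so $R_X(p(\ga_1))$ lies in the region of the plane cut off by $R_X(p(\al))\cup\{y_\al\}\cup R_X(p(\ga_2))\cup(C_2\cap Y)$ that contains the $X$-rays with arguments in $(p(\al),p(\ga_2))$, and hence can only land in $(C_2\cap Y)\cup\{y_\al\}$; but by hypothesis it lands at $y_{\ga_1}\in C_1$, a contradiction. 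Without an argument of this kind your proof of (1) $\Rightarrow$ (2) does not go through.
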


\begin{proof} Suppose that $Y$
is strategically placed in $X$ with anchor set $\Ac$
and $p:\Ac\to \uc $ as
the external connecting function.
Suppose that $\al\in \Ac$ and  the ray $R_Y(\al)$ lands on $y_\al\in Y$. Clearly,
$R_X(p(\al))$ can be viewed as a curve in $\C\sm Y$ which
accumulates in $Y$; more precisely, we can say that $R_X(p(\al))$
lands on $y_\al$. Thus, $R_X(p(\al))$ is homotopic to some external
ray $R_Y(\be)$ in $\C\sm Y$ under a homotopy which fixes $y_\al$ (so that
the ray $R_Y(\be)$ lands on $y_\al$ too). If
$\al\ne \be$, then both components of $\C\sm \ol{R_Y(\al)\cup
R_Y(\be)}$ intersect $Y$ (because two \emph{distinct} external rays
of $Y$ which land on the same point of $Y$ cannot be homotopic
outside $Y$).

Choose $\ga_1,\ga_2\in\Ac$ so that $\al<\ga_1<\be<\ga_2<\al$ and
$R_Y(\ga_1)$ and $R_Y(\ga_2)$ land in different components $C_1, C_2$
of $\C\sm \ol{R_Y(\al)\cup R_Y(\be)}$, respectively. Let $R_Y(\ga_1)$ land
on a point $y_{\ga_1}\in C_1\cap Y$ and let $R_Y(\ga_2)$ land on a point
$y_{\ga_2}$ in $C_2\cap Y$. Then $R_X(p(\ga_2))$ is an external ray of $X$
which also lands on $y_{\ga_2}$.  Since $\al<\ga_1<\ga_2$, then
$p(\al)<p(\ga_1)<p(\ga_2)$.

Consider the set $E$ of angles in $p(\Ac)$ which belong to $(p(\al),
p(\ga_2))$. Consider also the component $\widetilde E$ of the set
$$\C\sm [R_X(p(\al))\cup \{y_\al\} \cup R_X(p(\ga_2)\cup (C_2\cap Y)]$$
containing external rays of $X$ with arguments from $E$. The
external rays of $X$ with argument in $E$ can only land on points
from the boundary of $\widetilde E$ but not on points from other external
rays of $X$; thus, the external rays of $X$ with argument in $E$
can only land on points from $[C_2\cap
Y]\cup \{y_\al\}$. In particular this must be true for the ray
$R_X(p(\ga_1))$. However by definition this ray must land on the point
$y_{\ga_1}\in C_1\cap Y$, a contradiction.

Suppose next that condition~(\ref{ht}) holds. It suffices to show that
the map $p:\al\to \be(\al)$ preserves circular order. Recall that by
$\psi_Y:\disk^\infty \to U^\infty_Y$ we denote the conformal map with
derivative converging to a real number as $|z|\to\infty$. Similarly,
let $\psi_X:\disk^\infty \to U^\infty_X$ be the corresponding Riemann
map from the complement of the closed unit disk to the unbounded
component of $X$. Assume that $\al_1<\al_2<\al_3\in \uc$. Let
$\be_j=\be(\al_j)$; then the rays $\psi_Y^{-1}(R_X(\be_j))=\widehat R_j$
land on the points $e^{2\pi \be_j\, i}=x_j\in \uc$ and $x_1<x_2<x_3$.
Let $S_r$ be the circle $\psi_Y^{-1}\circ \psi_X(\{z\in \mathbb C \mid \,
|z|=r\})$ with an induced circular order $<$. As $r\searrow 1$, $S_r$
intersects $\widehat R_j$ in a unique point $y_j(r)$ and
$\lim_{r\searrow 1} y_j(r)=x_j$. This implies that
$y_1(r)<y_2(r)<y_3(r)$ as required.
\end{proof}


Suppose that $Y\subset X$ is strategically placed in $X$ with anchor
set $\Ac$. Then \cref{homotope} implies that for any $\al\in \Ac\subset
\uc$ the ray $\psi^{-1}_Y(R_X(p(\al)))$ lands on $\al\in \uc$. This
visualization is useful in the proof of the next lemma that describes
intersections between closures of components of $X\sm Y$ and $Y$. It
follows easily from the assumptions that $X$ is unshielded and $Y$ is
strategically placed in $X$.

\begin{lem}\label{compinimp}
Suppose that $X$ is an unshielded continuum and $Y\subset X$ is a continuum strategically
placed in $X$. If $C$ is a component of $X\sm Y$, then
\[|\ol{\psi^{-1}_Y(C)}\cap \partial \mathbb D|=1.\]
In particular, if $\al=\ol{\psi^{-1}_Y(C)}\cap \partial \mathbb D$, then
$\ol{C}\cap Y\subset \imp_Y(\al)$.
\end{lem}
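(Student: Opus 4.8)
The plan is to translate everything into the punctured disk via $\psi_Y$ and use the fact that $Y$ is unshielded (so $\psi_Y$ carries prime-end/crosscut structure faithfully), together with the density of the anchor set $\Ac$. First I would fix a component $C$ of $X\setminus Y$ and consider $\ol{\psi_Y^{-1}(C)}\cap\partial\D$. Since $\psi_Y$ is a conformal homeomorphism of $\iD$ onto $U_Y^\infty$ and $Y$ is unshielded, this intersection is a nonempty closed subset $K\subset\uc$; the goal is to show $K$ is a single point. Suppose not, so $K$ contains two distinct angles and hence an arc-gap: there are $\al_1\neq\al_2$ in $K$ such that one of the two open arcs $(\al_1,\al_2)\subset\uc$, call it $I$, is disjoint from $K$ (or, at least, we can find two points of $K$ with a whole subarc of $\uc\setminus K$ strictly between them — here I'd be a bit careful, since $K$ could be a Cantor set; the right statement is that $K$ being non-degenerate forces the existence of such a configuration, because if $K$ were a non-degenerate set with empty interior I can still pick $\al_1,\al_2\in K$ with an entire component $I$ of $\uc\setminus K$ lying strictly between them).

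Next I would exploit strategic placement. By \cref{homotope}, for each $\al\in\Ac$ the ray $\psi_Y^{-1}(R_X(p(\al)))$ lands at the angle $\al\in\uc$; equivalently, pulling $X$'s external ray $R_X(p(\al))$ back by $\psi_Y$ gives a curve in $\iD$ landing on $\al$, and it accumulates in $\uc$ only at $\al$. Now pick $\al_0\in\Ac\cap I$ with landing point $y_{\al_0}\in Y$ (possible by density of $\Ac$, condition (1) of \cref{d:strapla}). The pulled-back ray $\widehat R=\psi_Y^{-1}(R_X(p(\al_0)))$ lands at $\al_0$, which lies strictly between the two points $\al_1,\al_2$ of $K$. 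On the other hand, $R_X(p(\al_0))$ is an external ray of $X$ and hence lies in $\C\setminus X\subset\C\setminus Y$; its image region $\psi_Y(\iD)=U_Y^\infty$ contains it, but $R_X(p(\al_0))$ together with its landing point $y_{\al_0}\in Y\subset\partial U_Y^\infty$ forms a curve whose $\psi_Y$-preimage separates the disk. The point $y_{\al_0}$ lies in $\ol C$ (it is the landing point of an $X$-ray that "belongs to" the gap cut out by $C$), and more precisely $C$ is the complementary component of $X$ sitting "behind" this ray; so $\psi_Y^{-1}(C)$ accumulates at $\al_0$, forcing $\al_0\in K$. That contradicts $\al_0\in I\subset\uc\setminus K$. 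Hence $|K|=1$.

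With $K=\{\al\}$ established, the second assertion is then soft: if $\ol{\psi_Y^{-1}(C)}$ meets $\partial\D$ only at $\al$, then for any fundamental chain $\{C_i\}$ of crosscuts of $Y$ for the angle $\al$, all but finitely many crosscuts $C_i$ have the entire set $C$ in their shadow $S_{C_i}$ (since $\psi_Y^{-1}(C)$ eventually lies inside $\psi_Y^{-1}(S_{C_i})$, a neighborhood of $\al$ in $\iD$); taking closures, $\ol C\subset S_{C_i}$ for large $i$, and $\ol C\cap Y\subset\bigcap_i S_{C_i}=\imp_Y(\al)$ by \cref{d:impress}.

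The main obstacle I anticipate is the topological bookkeeping in the middle paragraph: making rigorous the claim that the $X$-ray $R_X(p(\al_0))$ actually "enters the gap determined by $C$," i.e. that its pulled-back landing angle $\al_0$ must lie in $\ol{\psi_Y^{-1}(C)}$ rather than in the pullback of some other complementary component of $X$. This is exactly the separation argument used in the proof of \cref{homotope} (partitioning angles by a union of two rays and a subcontinuum of $Y$, and noting that $X$-rays can only land on the boundary of the resulting region): one chooses auxiliary angles $\ga_1,\ga_2\in\Ac$ on the two sides and shows the external rays with arguments between them are trapped against $\ol C\cup\{y_{\al_0}\}$. So I would essentially reuse that trapping argument, now with the roles arranged so that the trapped rays are forced to land in $\ol C$, pinning $\al_0$ into $K$.
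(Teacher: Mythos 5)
Your setup — pulling everything back to $\iD$ via $\psi_Y$, choosing an angle $\al_0\in\Ac$ in a gap of $K=\ol{\psi_Y^{-1}(C)}\cap\partial\D$, and invoking \cref{homotope} to get that $\psi_Y^{-1}(R_X(p(\al_0)))$ lands at $\al_0$ — is exactly the paper's, and your treatment of the second assertion (shadows of a fundamental chain at $\al$ eventually contain $\ol C$) is also correct and matches the paper. The problem is the step you yourself flag as the ``main obstacle'': it is a genuine gap, and moreover you are aiming at the wrong contradiction. You try to conclude that $\al_0\in K$ by asserting that $y_{\al_0}\in\ol C$ and that $C$ ``sits behind'' the ray $R_X(p(\al_0))$, so that $\psi_Y^{-1}(C)$ accumulates at $\al_0$. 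Neither claim is justified: $y_{\al_0}$ is merely the common landing point of $R_Y(\al_0)$ and $R_X(p(\al_0))$, and there is no reason it should lie in $\ol C$ (for $\al_0\notin K$ one expects exactly the opposite, since $\ol C\cap Y$ is trapped near the impressions of angles in $K$); and even if $y_{\al_0}\in\ol C$ held, the preimage $\psi_Y^{-1}(C)$ could approach $y_{\al_0}$ only through prime ends at angles in $K$, not at $\al_0$. So the chain of implications leading to ``$\al_0\in K$'' does not close.

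The fix is simpler than what you are attempting, and it is the paper's actual punchline. Because $\psi_Y^{-1}(C)$ is a \emph{connected} subset of $\C\sm\ol{\D}$ whose closure meets $\partial\D$ in at least the two endpoints of your arc $I$, every curve in $\C\sm\ol\D$ that lands at an interior point of $I$ and tends to $\infty$ must cross $\psi_Y^{-1}(C)$. Applying this to the curve $\psi_Y^{-1}(R_X(p(\al_0)))$, which lands at $\al_0\in I$ by \cref{homotope}, yields $\psi_Y^{-1}(C)\cap\psi_Y^{-1}(R_X(p(\al_0)))\ne\0$, hence $C\cap R_X(p(\al_0))\ne\0$. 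That is already absurd: $C\subset X$ while $R_X(p(\al_0))\subset U_X^\infty$, and these sets are disjoint. No claim about $y_{\al_0}$, about which component of $X\sm Y$ lies ``behind'' the ray, or about $\al_0$ belonging to $K$ is needed; the contradiction is the intersection of an external ray of $X$ with $X$ itself.
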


\begin{proof}
Observe that $\psi^{-1}_Y(C)$ is a connected subset of $\C\sm \ol{\mathbb D}$ (because $X$ is unshielded). It follows that
if $\ol{\psi^{-1}_Y(C)}\cap \partial \mathbb D$  is non-degenerate then there exists a non-degenerate
arc $[p,q]\subset \partial \mathbb D$ such that \emph{any} (not necessarily radial) ray $T\in \C\sm \ol{\mathbb D}$
which lands at $\be\in (p, q)$ must intersect $\psi^{-1}_Y(C)$.
Choose $\al\in(p,q)\cap \Ac$. By
Lemma~\ref{homotope}, $\psi^{-1}_Y(R_X(p(\al))$ lands on $\al$.  Then
$\psi^{-1}_Y(C)\cap \psi^{-1}_Y(R_X(p(\al))\ne\0$ and, hence, $C\cap
R_X(p(\al))\ne\0$, a contradiction. To prove the last claim of the lemma choose
a fundamental system of crosscuts $B_i$ such that $\psi^{-1}_Y(B_i)$
converge to $\al=\ol{\psi^{-1}_Y(C)}\cap \partial \mathbb D$. Then by definition
their shadows converge to $\imp_Y(\al)$. Since all these shadows contain $\ol{C}\cap Y$,
it follows that $\ol{C}\cap Y\subset \imp_Y(\al)$ as desired.
\end{proof}

\cref{compinimp} motivates the following definition.

\begin{defn}[Angles associated with components of $X\sm Y$]\label{d:roots}
Suppose that $X$ is an unshielded continuum and $Y\subset X$ is a
continuum strategically placed in $X$. Given a component
$C$ of $X\sm Y$ we call the angle $\al$ such that
$\ol{\psi^{-1}_Y(C)}\cap \partial \mathbb D=\{\al\}$
the \emph{angle associated with $C$} and denote it by $\al(C)$ which defines a map from the
family of components of $X\sm Y$ to the unit circle. We also define
the function $C$ which associates to any point $x\in X\sm Y$ the component
$C(x)$ of $X\sm Y$ such that $x\in C(x)$. Finally, we consider a function
$\al:X\sm Y\to \uc$ defined as $\al(x)=\al(C(x))$ for every $x\in X\sm Y$.
\end{defn}

Using the terminology introduced in \cref{d:roots} we can restate
\cref{compinimp} as follows: if $X$ is an unshielded continuum and
$Y\subset X$ is a continuum strategically placed in $X$ then for every
component $C$ of $X\sm Y$ we have $\ol{C}\cap Y\subset \imp_Y(\al(C))$.

We will need the following geometric lemma.

\begin{lem}\label{l:converg}
Suppose that $X$ is an unshielded continuum and $Y\subset X$ is a
continuum strategically placed in $X$. Let $\{x_i\}$ be a sequence of
points of $X\sm Y$ such that $x_i\to x$ and $\al(x_i)\to \be$. Then
either $x\in \imp_Y(\be)$ or $x\in X\sm Y$ and $\al(x)=\be$. In
particular, the map $\al:X\sm Y\to \uc$ is continuous.
\end{lem}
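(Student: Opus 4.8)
The plan is to argue by contradiction, exploiting the rigidity established in \cref{compinimp} and the order-preserving nature of the connecting function. Suppose $x_i\to x$ and $\al(x_i)\to\be$, but the conclusion fails: $x\notin\imp_Y(\be)$ and it is not the case that $x\in X\sm Y$ with $\al(x)=\be$. There are two cases to rule out. First, suppose $x\in Y$. Since $x_i\in\ol{C(x_i)}$ and, by \cref{compinimp}, $\ol{C(x_i)}\cap Y\subset\imp_Y(\al(x_i))$, the point $x_i$ is "close to" $\imp_Y(\al(x_i))$ in the sense that $\ol{C(x_i)}$ meets $\imp_Y(\al(x_i))$; I would use upper semicontinuity of the impression map $\gamma\mapsto\imp_Y(\gamma)$ together with $\diam(C(x_i))\to 0$ (which itself needs justification — see below) to conclude $x\in\imp_Y(\be)$, contradicting the first part of our assumption. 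Second, suppose $x\in X\sm Y$ but $\al(x)=\al(C(x))\ne\be$. Then for all large $i$ we have $\al(x_i)$ bounded away from $\al(x)$, yet $x_i\to x$ forces (eventually) $x_i\in C(x)$ for $i$ large — because $C(x)$ is open in $X\sm Y$ and $x$ is an interior point of $X\sm Y$ — so $\al(x_i)=\al(x)$ eventually, contradicting $\al(x_i)\to\be\ne\al(x)$. Once the dichotomy is proved, continuity of $\al$ on $X\sm Y$ is immediate: if $x_i\to x$ with all points in $X\sm Y$, then $x\in X\sm Y$, so the first alternative (which would put $x\in\imp_Y(\be)\subset Y$) is impossible unless $x$ also lies in $Y$, hence the second alternative holds and $\al(x)=\be=\lim\al(x_i)$.

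The subtle point — and the main obstacle — is the claim that $\diam(C(x_i))\to 0$ along any sequence with $x_i\to x\in Y$, or more precisely a suitable substitute controlling how far $x_i$ can be from $\imp_Y(\al(x_i))$. This is where \cref{compinimp} and \cref{homotope} do the real work. The preimage $\psi_Y^{-1}(C)$ of a component $C$ of $X\sm Y$ is a connected subset of $\D^\infty$ whose closure meets $\partial\D$ in the single point $\al(C)$; so as a subset of the abstract complement, $\psi_Y^{-1}(C)$ is "pinched" at $\al(C)$, and by the standard correspondence between crosscuts of $Y$ and small arcs near $\partial\D$ one gets a fundamental chain $\{B_i\}$ of crosscuts of $Y$ with $\psi_Y^{-1}(B_i)\to\al(C)$ whose shadows contain $\ol{C}\cap Y$. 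If the diameters $\diam(C(x_i))$ did \emph{not} go to zero, one could extract a subsequence along which $C(x_i)$ Hausdorff-converges to a nondegenerate continuum $K$ with $K\cap Y\ne\0$ and (by continuity of $\al$ on the level of associated angles, which we are simultaneously proving — so care is needed to avoid circularity) the limiting "direction" $\be$; one then shows $K\setminus Y$ would have to be a single component $C(x)$ with $\al(C(x))=\be$, landing us back in the second alternative. To keep the argument non-circular I would phrase the diameter control purely in terms of the angle $\al(x_i)$ and the geometry of $\psi_Y^{-1}$: pick crosscuts $B$ of $Y$ with $\psi_Y^{-1}(B)$ a tiny arc straddling angle near $\be$; for $i$ large, $\al(x_i)$ lies under this arc, so $C(x_i)$ lies in the shadow $S_B$, whence $x_i\in S_B$ and $\ol{C(x_i)}\cap Y\subset S_B$; shrinking $B$ shows $x\in\bigcap S_B=\imp_Y(\be)$.

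I expect the only genuinely delicate verification to be that "for $i$ large, $\psi_Y^{-1}(C(x_i))$ lies entirely in the region cut off by a prescribed small crosscut near angle $\be$." This uses that $x_i\to x$ in the plane \emph{and} that $\al(x_i)\to\be$, combined with the fact (from \cref{compinimp}) that $\psi_Y^{-1}(C(x_i))$ has its closure meeting $\partial\D$ only at $\al(x_i)$, so it cannot "escape" past a crosscut whose shadow it would have to enter and exit through $\partial\D$ at a second point. Everything else — upper semicontinuity of impressions, openness of components of $X\sm Y$ in $X\sm Y$, the extraction of Hausdorff limits — is standard continuum theory, so I would state those steps briefly and concentrate the write-up on this crossing argument.
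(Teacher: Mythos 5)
Your overall architecture (the two cases, \cref{compinimp}, upper semicontinuity of impressions, shrinking shadows converging to $\imp_Y(\be)$) matches the paper's, and you correctly locate the delicate point; but both of your case arguments contain genuine gaps. In the case $x\in X\sm Y$ with $\al(x)\ne\be$, you claim $x_i\in C(x)$ eventually because ``$C(x)$ is open in $X\sm Y$.'' Components of an open subset of a continuum are open only when the space is locally connected at the relevant points, and the whole point of this paper is that $X$ need not be locally connected: in a comb-like continuum a point of one component of $X\sm Y$ can be a limit of points lying in infinitely many \emph{other} components. The paper avoids this by a separation argument: for $\ta<\be<\ga$ in $\Ac$, all $C(x_i)$ with $\al(x_i)\in(\ta,\ga)$ lie in one closed set $Q(\ta,\ga)$ built from $Y$ and the external rays $R_X(p(\ta))$, $R_X(p(\ga))$ of $X$, so $x\in Q(\ta,\ga)$; choosing $\ta,\ga$ with $\al(x)\notin[\ta,\ga]$ makes $C(x)$ disjoint from $Q(\ta,\ga)$, a contradiction.

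In the case $x\in Y$, your claim that $\al(x_i)$ lying under a small crosscut $B$ of $Y$ forces $C(x_i)\subset S_B$ is false, and your proposed crossing argument does not repair it: a connected set whose closure meets $\partial\D$ only at $\al(x_i)$ can still leave the shadow by crossing the crosscut $\psi_Y^{-1}(B)$ itself --- it need not return to $\partial\D$ at a second point. Since a crosscut of $Y$ is merely an arc in $U_Y^\infty$ with endpoints on $Y$, nothing prevents it from meeting $X\sm Y$, so components such as long hairs attached near $\imp_Y(\al(x_i))$ can stick far out of $S_B$ while still having the right associated angle; in particular $x_i$ itself could a priori lie outside every such shadow. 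The paper's fix is to bound the region between the rays $R_X(p(\ta))$ and $R_X(p(\ga))$ (which are disjoint from $X$, so no component can cross them) by nested crosscuts $T(\ta,\ga)\supset T(\ta',\ga')$ and to show that every point \emph{outside} $S_T$ of a component $C$ with $\al(C)\in(\ta',\ga')$ is at distance at least some $\e>0$ from $Y$; since $x_i\to x\in Y$, eventually $x_i\in S_T$, and shrinking the crosscuts gives $x\in\imp_Y(\be)$. Without replacing your crosscuts of $Y$ by structures built from external rays of $X$, the key containment you need does not hold.
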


\begin{proof}
Since impressions are upper semi-continuous and because
$\ol{C(x_i)}\cap Y\subset \imp_Y(\al(x_i))$ by \cref{compinimp}, we have that
$$\limsup \ol{C(x_i)}\cap Y\subset \limsup \imp_Y(\al(x_i))\subset \imp_Y(\be).$$

\noindent If angles $\ta, \ta', \ga', \ga\in \Ac$ are close to $\be$
and $\ta<\ta'<\be<\ga'<\ga$ then for sufficiently large $i$ we have
that $\al(x_i)=\al_i\in (\ta, \ga)$, and by \cref{homotope} all components
$C(x_i)$ are contained in the same appropriately chosen component
$Z(\ta, \ga)$ of $\C\sm Y\cup R_X(p(\ta))\cup R_X(p(\ga))$ containing
external rays of $X$ with arguments from $(p(\ta), p(\ga))$. Since the set
$$Q(\ta, \ga)=Z(\ta, \ga)\cup R_X(p(\ta))\cup R_X(p(\ga))\cup Y$$
is closed this implies that $x\in Q(\ta, \ga)$.
Consider now two possibilities.

\noindent 1. Suppose that $x\notin Y$ but $\al(x)\ne \be$. Then we can
choose angles $\ta$ and $\ga$ so that $\al(x)\notin [\ta, \ga]$ and
therefore $C(x)$ is disjoint from $Q(\ta, \ga)$, a contradiction with
the fact that $x\in Q(\ta, \ga)$. Thus, if $x\notin Y$ then $\al(x)=\be$.

\noindent 2. Suppose that $x\in Y$. Let us show that then $x\in
\imp_Y(\be)$. Indeed, choose angles $\ta, \ta', \ga', \ga$ as above.
Draw crosscuts $T(\ta, \ga)=T$ and then $T(\ta', \ga')=T'$ inside the
shadow $S_T$ of $T$. Then for some $\e>0$ every point $z\notin S_T$ of
a component $C$ of $X\sm Y$ with $\al(C)\in (\ta', \ga')$ is at least
$\e$-distant from $Y$. In particular, if $x_i\notin S_T$ then the
distance between $x_i$ and $Y$ is at least $\e$. Since $x_i\to x\in Y$, it follows
that $x_i\in S_T$ for sufficiently large $i$, and hence that $x\in \imp_Y(\be)$.

This completes the proof of the lemma.
\end{proof}

Recall, that given a map $h:A\to B$ we call point-inverses of $h$
\emph{($h$-)fibers.} The following lemma is proven in \cite{bco11}.

\begin{lem}[\cite{bco11}]\label{l:fib-in-imp}
Let $K\subset \C$ be an unshielded continuum and $m:K\to Z$ be a monotone map
of $K$ to a locally connected continuum $Z$. Then all fibers of $m$ are unions
of $K$-impressions (equivalently, $m$ collapses any $K$-impression to a point).
In particular, this holds for the finest monotone map $m_K$ of $K$.
\end{lem}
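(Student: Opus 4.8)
The plan is to reduce the lemma to a single assertion: \emph{for every $\al\in\uc$, the impression $\imp_K(\al)$ lies in one fiber of $m$}, equivalently $m(\imp_K(\al))$ is a single point. This suffices. Indeed, since $K$ is unshielded we have $\bigcup_\al\imp_K(\al)=K$ (as recalled just above), so every point of a fiber $m^{-1}(z)$ belongs to some $\imp_K(\al)$; if $m(\imp_K(\al))$ is a single point it must equal $z$, whence $\imp_K(\al)\subseteq m^{-1}(z)$. Thus $m^{-1}(z)$ is the union of the impressions it meets, which is the conclusion of the lemma.

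The second reduction is to the finest monotone map. By \cref{t:models}, $K$ has a finest monotone map $m_K:K\to L_K$, and by the defining property of the finest locally connected model every monotone $m:K\to Z$ with $Z$ locally connected factors as $m=g\circ m_K$. Hence if $m_K(\imp_K(\al))$ is a single point, so is $m(\imp_K(\al))=g(m_K(\imp_K(\al)))$, and it is enough to treat $m=m_K$. If $L_K$ is degenerate there is nothing to prove, so assume $L_K$ is non-degenerate.

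Now I would invoke the geometric part of \cref{t:models}: the extension $\widehat{m_K}:\widehat\C\to\widehat\C$ collapses, within $\C\sm K$, only those complementary domains of $K$ whose boundary is collapsed by $m_K$. Since $L_K=m_K(K)$ is non-degenerate, the boundary $K=\partial U_K^\infty$ is not collapsed, so $\widehat{m_K}$ restricts to a homeomorphism of $U_K^\infty$ onto the unbounded complementary domain $U_{L_K}^\infty$ of $L_K=\widehat{m_K}(K)$, fixing $\infty$. Fix $\al$ and a fundamental chain $\{C_i\}$ for $\al$ with shadows $S_{C_i}$, so that $\imp_K(\al)=\bigcap_i S_{C_i}$. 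Being a homeomorphism of $U_K^\infty$, $\widehat{m_K}$ sends each $C_i$ to an arc of $U_{L_K}^\infty$ with endpoints in $L_K$ and preserves the property that $C_i$ separates $C_{i+1}$ from $\infty$; and by uniform continuity of $\widehat{m_K}$ on $\widehat\C$, $\diam C_i\to0$ forces $\diam\widehat{m_K}(C_i)\to0$. Passing to suitable subcrosscuts, one obtains from $\{\widehat{m_K}(C_i)\}$ a genuine fundamental chain for some prime end $\gamma$ of $L_K$ whose shadows have the same intersection; then, by continuity of $\widehat{m_K}$, $\widehat{m_K}(\imp_K(\al))=\bigcap_i\widehat{m_K}(S_{C_i})=\imp_{L_K}(\gamma)$. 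Because $L_K$ is a locally connected continuum, Carath\'eodory's theorem shows that every prime-end impression of $L_K$ is a single point. Since $\imp_K(\al)\subseteq K$ and $\widehat{m_K}|_K=m_K$, this means $m_K(\imp_K(\al))$ is a single point, completing the argument.

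The two reductions are routine. The step I expect to be delicate is the last one: checking that the $\widehat{m_K}$-images of a fundamental chain for $\al$ really yield, after passing to subcrosscuts making the endpoints pairwise distinct, a fundamental chain of $L_K$, so that $\bigcap_i\widehat{m_K}(S_{C_i})$ is a bona fide prime-end impression of $L_K$ to which Carath\'eodory applies. One must also quote the precise homeomorphism statement of \cref{t:models} on $U_K^\infty$, and use that $L_K$ is unshielded so that local connectedness of $L_K$ is local connectedness of $\partial U_{L_K}^\infty$. I expect this bookkeeping to be the main obstacle.
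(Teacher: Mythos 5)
The paper itself gives no proof of this lemma; it is imported verbatim from \cite{bco11}, so there is nothing in-paper to compare against line by line. Judged on its own merits, your strategy is sound and is essentially the standard route: reduce to showing $m(\imp_K(\al))$ is a point, reduce to the finest map $m_K$ via the factorization $m=g\circ m_K$ supplied by \cref{t:models}, use the planar extension $\widehat{m_K}$ (a homeomorphism of $U_K^\infty$ onto $U_{L_K}^\infty$, since $K=\partial U_K^\infty$ is not collapsed when $L_K$ is non-degenerate), push a fundamental chain for $\al$ forward, and exploit local connectedness of $L_K$. Both reductions are correct, and the detour through $m_K$ is a legitimate way to obtain a planar extension inside this paper's framework (the source \cite{bco11} instead proves the extension statement for an arbitrary monotone map onto a locally connected continuum, which avoids any appearance of circularity with the construction of the finest model).

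The gap is exactly where you flagged it, and it is real: the images $\widehat{m_K}(C_i)$ need not be refinable into a fundamental chain of $L_K$, so you cannot identify $\bigcap_i\widehat{m_K}(S_{C_i})$ with a prime-end impression of $L_K$ and quote Carath\'eodory. Two things go wrong. First, the two endpoints of $C_i$ may lie in the same fiber of $m_K$, in which case $\widehat{m_K}(\ol{C_i})$ is a Jordan curve meeting $L_K$ in a single point; it has \emph{no} subarc that is a crosscut, so ``passing to subcrosscuts'' is impossible. (This case is in fact harmless for the conclusion: the image of the shadow is trapped in the closed Jordan domain bounded by $\widehat{m_K}(\ol{C_i})$, whose diameter tends to $0$.) Second, even when the image endpoints are distinct, the closures $\widehat{m_K}(\ol{C_i})$ for different $i$ may share endpoints on $L_K$, violating condition (1) of \cref{d:crosscut}. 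The clean way to finish is not to manufacture a prime end of $L_K$ but to prove directly that a crosscut $D$ of a locally connected unshielded continuum $L$ with $\diam D$ small has a shadow of small diameter: join the endpoints $a,b$ of $D$ by a connected set $E\subset L$ of small diameter (uniform local connectedness of compact locally connected spaces), and apply Janiszewski's theorem to $A=\ol{D}\cup E$ and $B=L$ (note $A\cap B=E$ is connected, $A\cup B$ separates the bounded side of $U_L^\infty\sm D$ from $\infty$, and $B$ alone does not) to conclude that the bounded side lies in a bounded complementary component of the small continuum $\ol{D}\cup E$, hence has diameter at most $\diam(\ol{D}\cup E)$. With that fact and the loop case above, $\diam\widehat{m_K}(S_{C_i})\to0$, so $m_K(\imp_K(\al))=\bigcap_i\widehat{m_K}(S_{C_i})$ is a point, and your two reductions complete the proof.
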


We are ready to prove Theorem A.

\begin{thma}
Let $X$ be an unshielded plane continuum. If  $Y$ is
strategically placed in $X$, and $Y$ has a non-degenerate finest locally
connected model, then $X$ has a non-degenerate finest locally connected model.
\end{thma}


\begin{proof}
By \cref{t:models} it suffices to show that there exists a monotone map
from $X$ to a non-degenerate locally connected continuum $L$. Since $Y$
has a non-degenerate finest locally connected model, then there exists
the finest monotone map $m_Y:Y\to L$ so that $L$ is a non-degenerate
locally connected continuum.  We will extend the map $m_Y$ to a
monotone map  $m:X\to L$ as follows: for every $x\in X\sm Y$ set
$m(x)=m_Y(\imp_Y(\al(x)))$. Observe that since by
Lemma~\ref{l:fib-in-imp} the map $m_Y$ collapses all $Y$-impressions to
points, then the map $m(x)$ is well-defined. Let us show that this map
has the desired properties.

First we show that $m$ is continuous. To see that, we first show that
if $x_i\to x$ then one can find a subsequence $x_{i_j}$ such that
$m(x_{i_j})\to m(x)$. This is obvious if infinitely many points $x_i$
belong to $Y$ because $m_Y$ is continuous. Thus we may assume that
$x_i\in X\sm Y$ for every $i$. Choose a subsequence $x_{i_j}$ so that
$\al(x_{i_j})\to \be$. Then by \cref{l:converg} either $x\in
\imp_Y(\be)$, or $x\in X\sm Y$ and $\al(x)=\be$. In either case
$m(x)=m(\imp_Y(\be))$ while $m(x_{i_j})=m(\imp_Y(\al(x_{i_j})))$. Since
impressions are upper semi-continuous and $m$ is continuous, then
$m(\imp_Y(\al(x_{i_j})))=m(x_{i_j})\to m(\imp_Y(\be))=m(x)$ as desired.

We claim this implies continuity of $m$. Indeed, suppose that $x_i\to
x$ but $m(x_i)\not \to m(x)$. Refining our sequence we may assume that
$m(x_i)\to t\ne m(x)$. However by the previous paragraph we can find a
subsequence $x_{i_j}$ of $x_i$ such that $m(x_{i_j})\to m(x)$, a
contradiction.

Since for $y\in Y$, $m^{-1}(y)$ is the union of $(m_Y)^{-1}(y)$ and all
components of $X\sm Y$ whose closure intersects  $(m_Y)^{-1}(y)$,
$m^{-1}(y)$ is connected. Hence $m:X\to L$ is the desired monotone map.
\end{proof}

\section{Applications}

In this section we apply our results to complex dynamics.

\subsection{Preliminaries from complex dynamics}
We rely upon basic facts discussed, e.g., in \cite{mil06}. Let us fix a
polynomial $P$ of degree at least two.

\begin{defn}[Periodic points]\label{d:perpo}
A periodic point $p$ of period $n$ is \emph{repelling} if $(P^n)'(x)=
re^{2\pi i\al}$ with $r>1$ and \emph{parabolic} if $(P^n)'(p)=e^{2\pi
i\frac{p}{q}}$, with $p,q\in \mathbb N$. A periodic point $p$ of $P$ of
period $n$ and $(P^n)'(p)=e^{2\pi i\al}$ with $\al\in \mathbb R\sm
\mathbb Q$ is a \emph{Siegel point} if there exists an open disk $U$
containing $p$ so that $P^n|_U$ is analytically conjugate to the rigid
rotation $R(z)= e^{2\pi i\al}z$ of the open unit disk and a
\emph{Cremer point} if such a disk does not exist.
\end{defn}

Periodic points play a  crucial role in complex dynamics; in particular, they are used in
one of the standard equivalent definitions of the \emph{Julia set} of $P$.

\begin{defn}[(Filled) Julia set] \label{d:julia}
The Julia set $J(P)$ of a polynomial $P$ is the closure of the set of
repelling periodic points of $P$; it is known that $J(P)$ is compact.
The set $\C\sm U^\infty_{J(P)}=\toh(J(P))$ is called the \emph{filled Julia set}
and is denoted by $K(P)$.
\end{defn}

The Julia set $J(P)$ coincides with the boundary $\partial
U^\infty_{J(P)}$ of the open set $U^\infty_{J(P)}$ and, hence, $J(P)$
is unshielded. The dynamics of $P$ outside the filled Julia set $K(P)$
is rather predictable.

\begin{defn}[(Non-)escaping points]\label{d:escap}
Points attracted to infinity under iterations of $P$ are called
\emph{escaping}. Otherwise points are said to be \emph{non-escaping}.
\end{defn}

It is known that the unbounded complementary domain $U^\infty_{J(P)}$
of $J(P)$ is in fact the set of all escaping points while its
complement $K(P)$ is in fact the set of all non-escaping points. The
set $U^\infty_{J(P)}=U^\infty_{K(P)}$ is therefore called the \emph{basin of attraction
of infinity}.

The Julia set $J(P)$ is a continuum if and only if all critical points
of $P$ are non-escaping (in other words, the orbits of all critical
points of $P$ are contained in $K(P)$). We will first assume that
$J(P)$ (equivalently, $K(P)$) is connected.  Then it is known that all
repelling and parabolic periodic points of $P$ (and all their
pre-images) are the landing points of finitely many rays
$R_{J(P)}(\al)$ with $\al\in Q$.

In a vast majority of cases the connected Julia set of a polynomial is
either locally connected, or at least admits a non-degenerate finest
locally connected model. However, this is not always the case. To give
an example we need the following alternative definition of a Cremer point.

\begin{defn}\label{d:cremer}
Let $P$ be a polynomial. Suppose that $a$ is a periodic point of $P$ of
period $n$ such that $(P^n)'(a)=e^{2\pi i\theta}$ with $\theta$
irrational. Moreover, suppose that $a$ belongs to the Julia set $J(P)$
of $P$. Then $a$ is  a \emph{Cremer} periodic point of $P$.
\end{defn}

The main result of \cite{bo10} shows that in some cases the finest
locally connected model of a connected Julia set is degenerate.

\begin{thm}[\cite{bo10}]\label{t:cremer}
For the Julia set of a quadratic polynomial with a fixed \emph{Cremer}
point the finest locally connected model is a point.
\end{thm}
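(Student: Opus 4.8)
The plan is to argue by contradiction. Suppose the finest monotone map $m\colon J(P)\to L$ of the Cremer Julia set had non-degenerate, hence locally connected, target $L$. By \cref{t:models} the map $m$ extends to a monotone self-map of $\widehat\C$ fixing $\infty$; combining this with the dynamical naturality of the finest locally connected model of a connected Julia set established in \cite{bco11}, one obtains a degree-two branched covering $f\colon\widehat\C\to\widehat\C$ with $f\circ m=m\circ P$ on $J(P)$ and with $L=m(J(P))$ its (locally connected) Julia set. Thus $f$ is a topological quadratic polynomial with locally connected Julia set, and by Thurston's theory of invariant laminations $f|_L$ is conjugate to the factor $\sigma_2/\!\sim$ of the angle–doubling map by a $\sigma_2$-invariant lamination. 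In particular every fixed point of $f$ lying in $L$ has a well-defined \emph{rational} combinatorial rotation number (it corresponds to a finite fixed class of $\sim$), and any periodic orbits accumulating onto such a fixed point have combinatorial rotation numbers converging to that same rational value.

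Now consider $b=m(a)\in L$, the image of the Cremer fixed point $a$, which is a fixed point of $f$ in $L=J(f)$. The core of the argument is to show that $b$ \emph{cannot} have a rational rotation number, contradicting the previous paragraph. By the classical theory of Cremer fixed points (P\'erez-Marco), there are periodic orbits $O_k$ of $P$ of period $q_k\to\infty$ accumulating onto $a$ whose combinatorial rotation numbers about $a$ converge to the irrational rotation number $\theta$ of $a$. The key step is to check that $m$ does not destroy the rotational structure carried by the $O_k$: since each $m$-fiber is a union of $J(P)$-impressions by \cref{l:fib-in-imp}, a periodic $m$-fiber containing $a$ would be an invariant subcontinuum of $J(P)$ on which $P$ acts with a well-defined prime-end rotation number, and a Cremer-specific analysis (non-accessibility of $a$, the behaviour of $\imp_{J(P)}(\al)$ along the doubling orbit of suitably chosen angles, and a snail–lemma argument) shows that such a fiber can absorb at most finitely many of the $O_k$. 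Hence $f$ has periodic orbits $m(O_k)$ of period $q_k\to\infty$ encircling $b$ with combinatorial rotation numbers $p_k/q_k\to\theta\notin\mathbb Q$ — the desired contradiction, so $L$ must be degenerate.

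The main obstacle is precisely this middle step: ruling out that the finest monotone map quietly collapses the irrational rotational configuration near the Cremer point. It is here that the full strength of the finest-model machinery of \cite{bco11} (fibers are unions of impressions; the induced dynamics on $L$) must be combined with the fine structure of Cremer Julia sets, and this is the technical heart of \cite{bo10}. The remaining ingredients are essentially formal: the passage to the induced topological quadratic $f$ on $L$, the lamination description of $L$, and the incompatibility of an irrational rotation number with a fixed point lying on a locally connected quadratic Julia set — such a point would have to be the centre of a Siegel gap of $\sim$, which is a point of the Fatou set rather than of $L$.
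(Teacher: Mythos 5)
First, a remark on the comparison itself: the paper does not prove Theorem~\ref{t:cremer} at all — it is quoted from \cite{bo10} as a known result — so the only proof to measure your proposal against is the one in that reference, and your argument is not that argument.

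More importantly, your proposal has a genuine gap at precisely the step you yourself flag as the technical heart: the claim that the fiber $Y=m^{-1}(m(a))$ of the Cremer point can absorb at most finitely many of the small cycles $O_k$. No mechanism is offered for this, and none is available: the $O_k$ lie in arbitrarily small neighborhoods of $a$, and $Y$ is a continuum containing $a$, so nothing prevents $Y$ from swallowing every $O_k$ even under the standing assumption that $L$ is non-degenerate. If that happens, no periodic orbits $m(O_k)$ survive in $L$, the point $b=m(a)$ may perfectly well be a fixed point on whose fiber a single external ray accumulates (combinatorial rotation number $0$), and your contradiction evaporates; invoking non-accessibility of $a$ and the snail lemma does not produce the missing estimate. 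The proof in \cite{bo10} runs in exactly the opposite direction: instead of showing that the fiber of $a$ is small enough to leave rotational data visible in the quotient, one shows it is so large that it is everything. Concretely: fibers of any monotone map onto a locally connected continuum are unions of impressions (the precursor of Lemma~\ref{l:fib-in-imp}); the structure theory of quadratic Cremer Julia sets developed by the same authors forces the invariant fiber $Y\ni a$ to contain the critical point, hence the critical value; then $P^{-1}(Y)$ is connected and its $m$-image lies in the finite set $f^{-1}(m(Y))$, so $P^{-1}(Y)$ lies in a single fiber containing $Y$ and therefore equals $Y$; a closed, completely invariant subset of $J(P)$ with more than two points is all of $J(P)$ by density of backward orbits, so $L$ is a point. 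If you wish to salvage the rotation-number framework, you must actually prove the ``only finitely many cycles are absorbed'' claim, and that is at least as hard as the theorem; the soft ingredients you list (the induced topological quadratic on $L$, the lamination description, rationality of rotation numbers at fixed classes) are fine but carry none of the load.
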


In general the existence of a subcontinuum with a non-degenerate finest
locally connected model provides no information about such a model for
the entire unshielded continuum. However, if the subcontinuum is
strategically placed, then Theorem A shows that a non-degenerate model
for the entire space does exist. A natural choice of a subcontinuum of
$J(P)$ on which one can hope to have a non-degenerate finest locally
model is that of a connected Julia set of a \emph{polynomial-like} map
which is a power of $P$. This is another application of polynomial-like
maps that are a powerful tool in complex dynamics introduced by Douady
and Hubbard \cite{dh85}.

\begin{defn}[Polynomial-like maps]\label{d:pl}
A \emph{polynomial-like} map of degree $d$ is a triple $(U, V, f)$
where $U$ and $V$ are open subsets of $\mathbb C$ isomorphic to discs,
with $U$ relatively compact in $V$, and $f: U \to V$ is a proper
analytic map of degree $d$.
\end{defn}

Similar to polynomials, one can define the (filled) Julia set of a
polynomial-like map.

\begin{defn}[(Filled) Julia set of a polynomial-like map]\label{d:plj}
If $f: U\to V$  is a polynomial-like map of degree $d$, we will denote
\[K_f=\bigcap_{n\ge 0}  f^{-n}(U),\]
the compact set of points $z\in U$  such that $f^n(z)$ is defined and
belongs to $U$ for all $n\in \mathbb N$. The set $K_f$ is called the
\emph{filled Julia set} of $f$.  The \emph{Julia set} $J_f$ of $f$ is the
boundary of $K_f$.
\end{defn}

Given a polynomial $P$, we will often say that $P^n|_{K^*}:K^*\to K^*$ (or
$P^n|_{J^*}:J^*\to J^*$) is a polynomial-like map meaning that there
exist open sets $U$ and $V$ as in \cref{d:pl} such that $K^*$ is the
filled Julia set (or $J^*$ is the Julia set) of the corresponding
polynomial-like map $(P^n, U, V)$.

The term \emph{polynomial-like} maps is justified by the Straightening
Theorem stated below. However first we need one more definition.

\begin{defn}[Hybrid equivalence \cite{dh85}]\label{d:hybrid}
Two polynomial-like maps $f:U\to V$ and $g:U'\to V'$ are \emph{hybrid
equivalent} if there is a quasi-conformal map $\vp:U\to U'$ conjugating
$f$ to $g$ such that $\vp$ is conformal almost everywhere on $K(f)$ (in
other words, $\vp$ is such that $\vp \circ f=g\circ \vp$  near $K_f$).
The map $\vp$ is called a \emph{straightening map}.
\end{defn}

An important result of \cite{dh85} is given below; this theorem allows
us to talk about finest locally connected models of connected
\emph{polynomial-like} Julia sets.

\begin{thmS}[\cite{dh85}]
Let $f : U\to V$ be a polynomial-like map. Then $f$ is hybrid
equivalent to a polynomial $P$. Moreover, if $K(f)$ is
connected, then $P$ is unique up to $($global$)$ conjugation by an
affine map.
\end{thmS}

\subsection{Main applications in the connected case}
Suppose that the connected Julia set $J(P)$ of a polynomial $P$
contains a subcontinuum $K^*$ so that $P^n|_{K^*}$ is a polynomial-like
map. Then by the Straightening Theorem $P^n|_{K^*}:K^*\to K^*$ is hybrid
equivalent to a polynomial $g$ with connected filled Julia set $K(g)$.
In particular, under the hybrid equivalence appropriate arcs contained
in external rays of $K(g)$ correspond to arcs inside $U$ which
accumulate to the corresponding polynomial-like Julia set $J^*$ (the
open set $U$ is defined as in \cref{d:pl}). Slightly abusing the
language we will call these arcs \emph{polynomial-like rays} and will
denote them in the same way as we would have denoted external rays of
$K^*$ (or equivalently, of $J^*$), i.e. $R_{K^*}(\al)$ where $\al$ is
the argument of the  external ray of the polynomial
$g$ corresponding to $R_{K^*}(\al)$.

Recall that an external ray $R_J(\al)$ is said to
\emph{accumulate} in $J^*$ if $\Pr(\al)\subset J^*$. Also, it is easy
to see that the property of a point being repelling or parabolic is
preserved under hybrid equivalence. By \cref{d:julia} this allows one
to conclude that repelling periodic points of $P$ are dense in $J^*$.
Moreover, it follows that if $p$ is a repelling or parabolic periodic
point of $P$, then only finitely many external rays $R_{J^*}(\al)$ of $J^*$ and
finitely many external rays $R_J(\be)$ of $J$ land on $p$.


Suppose that $Y\subset X$ are unshielded plane continua.
Above in \cref{homotope}
we considered a map $p:\mathcal A\to\uc$; this map associated to a ray
$R_Y(\al)$ the ray $R_X(p(\al))$ so that both rays landed on the same
point $y_\al\in Y$ and were homotopic outside $Y$ by a homotopy fixing $y_\al$. In the case of
polynomials $f$ and polynomial-like maps $f^*$ it is easier to first consider the
''inverse''  map which associates rays $R_{J(f)}(\beta)$ which land on a
point $y_\beta\in J(f^*)$ to rays $R_{J(f^*)}(\nu(\beta))$ which land on $y_\be$ and are homotopic
to $R_X(\be)$ outside $Y$ by a homotopy which fixes $y_\be$.
This is accomplished in \cref{l:weakmonotone}.

In what follows, given a map, we call a point \emph{preperiodic} if it
is not periodic but eventually maps to a periodic point, and
\emph{(pre)periodic} if it is periodic or preperiodic. Recall that if
the Julia set $J(P)$ of a polynomial $P$ is connected and an angle
$\al$ is (pre)periodic then the external ray $R_{J(P)}$ lands on a
(pre)periodic (in the sense of $P$) point in $J(P)$ \cite{mil06}. Given
a set $T\subset \uc$ we say that a map $\Psi:T\to \uc$ is
\emph{extendably monotone} if
$\Psi$ has a monotone (but not necessarily continuous!) extension $m:\uc\to\uc$.

\begin{lem}\label{l:weakmonotone}
Suppose that $P$ is a polynomial of degree $d$ with connected Julia set
$J$ and $J^*\subset K$ is a subcontinuum of $J$ such that $P^n|_{J^*}$
is a polynomial-like map with filled Julia set $K^*$ and Julia set
$J^*$. Suppose that $P^n|_{J^*}$ is hybrid equivalent to a polynomial
$Q$ of degree $k$. Let $\mathcal B\subset \uc$ be the set of
\textbf{all} angles $\beta$ so that $R_J(\beta)$ lands on a point
$y_\beta\in J^*$.  Then there exists a extendably monotone continuous
map $\nu:\mathcal B\to\uc$ such that:
\begin{enumerate}
\item for each $\beta\in \mathcal B$ the ray $R_{J^*}(\nu(\beta))$ lands on the same point
$y_\beta$ and the rays $R_J(\beta)$ and $R_{J^*}(\nu(\beta))$ are homotopic outside $K^*$ under a homotopy which fixes the point $y_\beta$,
\item if $\mathcal B'\subset \mathcal B$ is the set of all (pre)periodic angles, then $\nu(\mathcal B')$ is dense in $\uc$,
\item $\nu\circ \si_d=\si_k\circ \nu$.
\end{enumerate}
\end{lem}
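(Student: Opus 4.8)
The plan is to build the map $\nu$ angle by angle using the homotopy-theoretic characterization from \cref{homotope} and then to promote it to an extendably monotone, continuous, and equivariant map. First I would fix a ray $R_J(\beta)$ with $\beta\in\mathcal B$, so that it lands at $y_\beta\in J^*$. Since $K^*$ is the topological hull of $J^*$, the basin $U^\infty_{K^*}$ coincides with $U^\infty_{J^*}$, so the tail of $R_J(\beta)$ past a suitable crosscut of $K^*$ is a curve in $\C\sm K^*$ that accumulates only at $y_\beta$; it is therefore homotopic, rel $y_\beta$, to a unique external ray $R_{J^*}(\nu(\beta))$ landing at $y_\beta$ (uniqueness because two distinct external rays of $J^*$ landing at the same point are never homotopic outside $J^*$, exactly as in the proof of \cref{homotope}). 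This defines $\nu:\mathcal B\to\uc$ and gives item (1) directly. The order-preservation (hence extendable monotonicity) of $\nu$ is proved by literally the same argument as in the second half of \cref{homotope}: pull the three landing points $e^{2\pi i\nu(\beta_j)}$ back under $\psi_{J^*}^{-1}\circ\psi_J$ and note that the nested circles $S_r=\psi_{J^*}^{-1}\circ\psi_J(\{|z|=r\})$ meet the pulled-back rays in points that converge, as $r\searrow 1$, to the $e^{2\pi i\nu(\beta_j)}$ while respecting the circular order inherited from $\{\beta_j\}$; continuity of $\nu$ on $\mathcal B$ follows from the same limiting picture (if $\beta_i\to\beta$ in $\mathcal B$, the corresponding pulled-back rays converge, forcing $\nu(\beta_i)\to\nu(\beta)$). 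Extendable monotonicity then follows since any order-preserving map of a subset of $\uc$ into $\uc$ extends to a (possibly discontinuous) monotone self-map of $\uc$.

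For item (3), I would use naturality of the construction under the dynamics. The polynomial $P^n|_{K^*}$ is hybrid equivalent to $Q$ by a quasiconformal $\varphi$ that is conformal a.e.\ on $K^*$; under $\varphi$ the external rays of $K(Q)$ correspond to the polynomial-like rays $R_{K^*}(\theta)$, and $\varphi$ conjugates $P^n|_{K^*}$ to $Q$, so $P^n$ sends $R_{K^*}(\theta)$ to $R_{K^*}(\sigma_k\theta)$ (landing points going to landing points). On the other side, $P$ sends $R_J(\beta)$ to $R_J(\sigma_d\beta)$, and it sends the crosscut tails and homotopies used to define $\nu$ to crosscut tails and homotopies of the same type (since $P$ is a branched cover sending $J\to J$, $J^*$ to a piece of $J$, and restricts to a covering of the basins). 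Chasing the homotopy class through $P$ therefore shows $R_J(\sigma_d\beta)$ is homotopic rel its landing point to $R_{K^*}(\sigma_k(\nu(\beta)))$ outside $K^*$, which by uniqueness gives $\nu(\sigma_d\beta)=\sigma_k(\nu(\beta))$. A small wrinkle is that one must first check $\sigma_d(\mathcal B)\subseteq\mathcal B$, i.e.\ that if $R_J(\beta)$ lands in $J^*$ then so does $R_J(\sigma_d\beta)$; this holds because $P(J^*)\subseteq J^*$ as $J^*$ is the Julia set of $P^n|_{K^*}$ (equivalently $P$-invariant up to the polynomial-like structure), so the landing point, being in $J^*$, maps into $J^*$.

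For item (2), recall that repelling and parabolic periodic points of $P$ are dense in $J^*$ (as noted in the excerpt, since this property is preserved by hybrid equivalence), and each such point is the landing point of finitely many external rays $R_{J^*}(\theta)$, necessarily with $\theta$ (pre)periodic under $\sigma_k$, and also of finitely many rays $R_J(\beta)$, necessarily with $\beta$ (pre)periodic under $\sigma_d$; such $\beta$ lie in $\mathcal B'$. Since $\nu$ carries each such $\beta$ to the argument of an external ray of $J^*$ landing at the same (repelling or parabolic) periodic point, the image $\nu(\mathcal B')$ contains the arguments of all external rays of $J^*$ landing at the repelling and parabolic periodic points of $P^n|_{K^*}$; as these landing points are dense in $J^*$ and the prime-end correspondence for $J^*$ is such that angles of rays landing at a dense set of $J^*$ are dense in $\uc$, we conclude $\nu(\mathcal B')$ is dense in $\uc$. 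The main obstacle I anticipate is the bookkeeping in item (3): making precise the claim that $P$ carries the defining homotopies for $\nu$ at $\beta$ to those at $\sigma_d\beta$ — one has to be careful about the degree of $P$ near $K^*$ versus the degree $k$ of the straightened map, and about choosing the crosscuts and fundamental chains compatibly with $P^n$ rather than $P$; everything else is a faithful repetition of the arguments already carried out for \cref{homotope} and \cref{compinimp}.
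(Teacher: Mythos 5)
Your construction of $\nu$ and your proof that it is extendably monotone follow essentially the paper's route: the paper defines $\nu(\beta)$ via the inverse Riemann map $\xi$ of $U^\infty_{J^*}$ (the curve $\xi(R_J(\beta))$ lands at a point $z\in\uc$ and $\nu(\beta)$ is the argument of the radial ray at $z$), and proves order preservation by a separation argument close to yours. But there are genuine gaps elsewhere. First, your proof of continuity --- ``the corresponding pulled-back rays converge, forcing $\nu(\beta_i)\to\nu(\beta)$'' --- does not work: convergence of the curves $\xi(R_J(\beta_i))$ on compact subsets of $\D^\infty$ says nothing about convergence of their landing points on $\uc$, since the tails of the curves can dive toward $\uc$ far from the limit landing point (this is exactly the phenomenon of non-degenerate impressions). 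The paper's continuity proof is not a ``limiting picture'' at all: it combines the already-established monotonicity with the density statement (2). If, say, $\nu(\beta_i)$ increases to $t\ne \nu(\beta_\infty)$, then by (2) there is a (pre)periodic $\hat\beta\in\mathcal B'$ with $\nu(\hat\beta)$ strictly between $t$ and $\nu(\beta_\infty)$, and extendable monotonicity then forces $\hat\beta$ to separate all the $\beta_i$ from $\beta_\infty$ on the circle, contradicting $\beta_i\to\beta_\infty$. So (2) must be proved first and continuity deduced from it; your proposal has neither this implication nor a substitute for it.

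Second, your proof of (2) rests on two unproved claims: (a) that every external ray of $J^*$ landing at a repelling or parabolic periodic point is homotopic, rel the landing point and outside $J^*$, to some external ray of $J$ landing there --- a priori $\nu$ produces only one $J^*$-ray per $J$-ray, and nothing you say rules out a $J^*$-ray at such a point that is matched by no $J$-ray; and (b) that ``angles of rays landing at a dense subset of $J^*$ are dense in $\uc$,'' which is false for general dense subsets because the prime-end correspondence is not continuous. The paper sidesteps both by a dynamical pullback: fix one preperiodic $\beta\in\mathcal B$ with $\al=\nu(\beta)$ and pull the pair $\bigl(R_J(\beta), R_{J^*}(\al)\bigr)$ back under all inverse branches of $P^*=P^n|_U$; the arguments of the resulting polynomial-like rays (the $\si_k$-preimages of $\al$) are dense in $\uc$, each pullback pair again lands at a common point and is homotopic outside $K^*$, and the pulled-back $J$-angles are (pre)periodic, so these dense arguments all lie in $\nu(\mathcal B')$. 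Finally, concerning (3), which the paper leaves to the reader: your justification that $\si_d(\mathcal B)\subseteq\mathcal B$ ``because $P(J^*)\subseteq J^*$'' is false for $n>1$; only $P^n(J^*)\subseteq J^*$ is guaranteed (the small Julia sets generally form a cycle), so the equivariance must be set up with respect to $P^n$ and $\si_d^n$ rather than $P$ and $\si_d$.
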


Notice that the continuity of $\nu$ on $\mathcal B$ only means that
$\nu$ is continuous at points of $\mathcal B$ and does not imply that
$\nu$ can be extended to a continuous monotone map of the circle to
itself.

\begin{proof}
Since $P^n|_{J^*}$ is polynomial-like, there exist Jordan disks
$U\subset \ol{U}\subset V$ such that $J^*\subset U$ and $P^n:U\to V$ is
polynomial-like. Denote $P^n|_U$ by $P^*$.

Let $R_J(\beta)$ be an external ray of $J$ which lands on a 
point $y_\be\in J^*$. 
Consider the inverse $\xi: U^\infty_{J^*}\to \disk^\infty$ of the
corresponding Riemann map from $\disk^\infty$ to $ U^\infty_{J^*}$ with
derivative converging to a real number at infinity. Then
$\xi(R_J(\be))$ is a curve which accumulates at a point $z\in \uc$.
Choose the polynomial-like ray $R_{J^*}(\al)$ of $J^*$ whose
$\xi$-image is the radial ray to $\disk^\infty$ landing at $z$ (the
argument of this radial ray and hence the argument of the corresponding
polynomial-like ray is denoted by $\al$). Since in the
$\disk^\infty$-plane the radial ray to $z$ and $\xi(R_J(\al))$ are
homotopic, it follows that $R_J(\be)$ and $R_{J^*}(\al)$ are homotopic
outside $J^*$ by a homotopy which fixes $y$ (the homotopy carries over
to $\C\sm J^*$ under the Riemann map). Define $\nu(\be)=\al$. Since
this construction goes through for all angles $\be\in \mathcal B$, this
defines a map $\nu:\mathcal B\to \uc$.

To see that $\nu$ is extendably monotone suppose that
$\nu(\be_1)=\nu(\be_2)$. Then $\xi(R_J(\be_1))$ and $\xi(R_J(\be_2))$
are two curves which land on the same point $z\in \uc$. Denote by $T$
the component of $\disk^\infty \sm [\xi(R_J(\be_1))
\cup\xi(R_J(\be_2))]$  whose closure meets $\uc$ only in the point
$z\in \uc$ (in other words, $T$ is the wedge between $\xi(R_J(\be_1))$
and $\xi(R_K(\be_2))$ which does not contain the unit disk). Then any
external ray $R_J(\ga)$ with $\xi(R_J(\ga))\subset T$ that lands on a
point of $J^*$ must land on $y$ so that $\xi(R_J(\ga))$ lands on $z$.

This implies that there exists an arc $A_z\subset \uc$ so that
$\nu^{-1}(z)=A_z\cap \mathcal B$. To see that there exists a monotone
extension of $\nu$ it remains to observe that circular orientation
among points of $\mathcal B$ is preserved under $\nu$ in the following
sense: if $\be_1<\be_1<\be_3$ then it is impossible that
$\nu(\be_1)<\nu(\be_3)<\nu(\be_2)$ as otherwise some external rays of
$K$ will have to intersect. Thus, the arcs $A_z$ constructed above for
all points $z\in \nu(\mathcal B)$ have the same circular order as the
points $z\in \uc$ which implies the desired.

Now, choose an angle $\be\in \mathcal B$ such that $y_\be\in J^*$, the
landing point of the external ray $R_J(\be)$, is preperiodic. Set
$\al=\nu(\be)$. Properties of polynomials (and hence of polynomial-like
maps) imply that the family of all polynomial-like rays which are
preimages of $R_{J^*}(\al)$ is such that their arguments are dense in
$\uc$. Each such polynomial-like ray $R^*$ with argument $\al'$ is a
unique pullback of $R_{J^*}(\al)$ under the appropriate branch of the
inverse function to $P^*$ (recall that $y_\be$ is not periodic). If we
simultaneously pull back $R_J(\be)$ under the same branch of the
inverse function of $P^*$ we will obtain an external ray $R_J(\be')$ of $J$
with argument $\be'$ which lands on the same point as $R^*$ and is
homotopic to $R^*$ outside $K^*$.  Denote the argument of $R$ by
$\al'$, then $\nu(\be')=\al'$. This shows that (2) holds.

To see that $\nu$ is continuous consider a sequence $\be_1<\be_2<\dots$
in $\mathcal B$ so that $\lim \be_i=\be_\infty\in\mathcal B$. Consider
the landing points $z_i$ of the curves $\xi(R_J(\be_i))$ and the
landing point $z_\infty$ of $\xi(R_J(\be_\infty))$. The fact that $\nu$
is extendably monotone implies that $z_1\le z_2\le \dots \le z_\infty$.
We claim that $z_\infty=\lim z_i$. Indeed, otherwise we have that
$z_1\le \lim z_i=t<z_\infty$. By (2) we can choose a (pre)periodic
angle $\hat \be\in \mathcal B'$ such that $t<\nu(\hat\be)<z_\infty$.
Since $\nu$ is extendably monotone this contradicts the fact that $\lim
\be_i=\be_\infty$. Thus, $z_\infty=\lim z_i$ as desired. The last claim
of the lemma is left to the reader.
\end{proof}

The following corollary easily follows from definitions, \cref{homotope}
and \cref{l:weakmonotone}

\begin{cor}\label{l:pl-ray-corr}
Suppose that the connected Julia set $J(P)$ of a polynomial $P$
contains a subcontinuum $J^*$ so that $P^n|_{J^*}$ is a polynomial-like
map for some $n\ge 1$. Then $J^*$ is strategically placed in $J(P)$.
\end{cor}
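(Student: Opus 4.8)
The plan is to deduce \cref{l:pl-ray-corr} from \cref{l:weakmonotone} by verifying directly the three conditions in the definition of ``strategically placed'' (\cref{d:strapla}). The map $\nu:\mathcal B\to\uc$ supplied by \cref{l:weakmonotone} is essentially the inverse of the external connecting function we seek, so the first task is to invert it. Recall that $\nu$ is extendably monotone and continuous on $\mathcal B$, and by item~(1) of \cref{l:weakmonotone} each ray $R_{J^*}(\nu(\beta))$ lands on $y_\beta\in J^*$ and is homotopic to $R_J(\beta)$ outside $K^*$ by a homotopy fixing $y_\beta$. I would let $\Ac=\nu(\mathcal B')$, where $\mathcal B'\subset\mathcal B$ is the set of (pre)periodic angles, and for $\al\in\Ac$ choose some $\be\in\mathcal B'$ with $\nu(\be)=\al$; then set $p(\al)=\be$. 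Since $\nu$ preserves circular order (the key point in the proof of \cref{l:weakmonotone}), $p$ is well-defined up to the choice of representative and preserves circular order, giving condition~(3) with the ``external connecting function'' $p$. Density of $\Ac$ in $\uc$ is exactly item~(2) of \cref{l:weakmonotone}.

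Next I would check conditions~(1) and (2) of \cref{d:strapla}. For condition~(1): given $\al\in\Ac=\nu(\mathcal B')$, we picked $\be\in\mathcal B'$ with $\nu(\be)=\al$; since $\be$ is (pre)periodic and $J(P)$ is connected, $R_J(\be)$ lands on a (pre)periodic point $y_\be\in J^*$, and by item~(1) of \cref{l:weakmonotone} the polynomial-like ray $R_{J^*}(\al)=R_{J^*}(\nu(\be))$ lands on that same point $y_\be$. Here one must be slightly careful about the meaning of ``$R_{J^*}(\al)$ lands on $y_\al\in Y$'' in \cref{d:strapla} versus ``polynomial-like ray''; I would invoke the identification described just before \cref{l:weakmonotone}, under which polynomial-like rays of $J^*$ are arcs accumulating in $J^*$ whose argument is inherited from the straightened polynomial $Q$, and note that landing of the polynomial-like ray corresponds to landing of the genuine external ray of the straightened Julia set, which is what \cref{homotope} and the homotopy statement allow us to transport. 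For condition~(2), density of $\{y_\al\}_{\al\in\Ac}$ in $J^*$: repelling periodic points of $P$ are dense in $J^*$ (noted in the text, via hybrid equivalence), each such point is the landing point of some external ray $R_J(\be)$ of $J$ with $\be\in\mathcal B'$, hence $y_{\nu(\be)}$ is that point and lies in $\{y_\al\}_{\al\in\Ac}$.

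Alternatively, and perhaps more cleanly, I would verify the equivalent characterization in \cref{homotope}\eqref{ht} rather than \cref{d:strapla} directly: take $\Ac=\nu(\mathcal B')$, and for $\al\in\Ac$ with chosen $\be$ satisfying $\nu(\be)=\al$, item~(1) of \cref{l:weakmonotone} says precisely that $R_J(\be)$ lands on $y_{\be}=y_\al$ and that $R_{J^*}(\al)$ and $R_J(\be)$ are homotopic outside $K^*$ (hence outside $J^*$, since the homotopy may be taken in $\C\sm K^*\subset\C\sm J^*$ after the identification) by a homotopy fixing the landing point. Together with density of $\Ac$, this is exactly condition~(\ref{ht}) of \cref{homotope} with $\be(\al)=\be=p(\al)$, and \cref{homotope} then yields that $J^*$ is strategically placed in $J(P)$.

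The main obstacle I anticipate is bookkeeping around the identification between polynomial-like rays of $J^*$ and genuine external rays of the straightened polynomial $Q$: one must make sure that ``landing'' of a polynomial-like ray (an arc inside $U$ accumulating on $J^*$) corresponds correctly to ``landing'' of the external ray in the sense of \cref{d:strapla}, and that the homotopy produced in \cref{l:weakmonotone} is genuinely a homotopy in $\{y_\al\}\cup\C\sm J^*$ fixing $y_\al$, not merely in $\C\sm K^*$. Once this translation is pinned down, the corollary ``easily follows from definitions, \cref{homotope} and \cref{l:weakmonotone}'', as claimed. The verification of circular-order preservation of $p$ is immediate from that of $\nu$ and requires no new argument.
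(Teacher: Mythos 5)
Your proposal is correct and follows essentially the same route as the paper: the paper's own proof also sets $\Ac=\nu(\mathcal B)$ (you take the dense subset $\nu(\mathcal B')$, which makes no difference), reads off density and the homotopy condition from \cref{l:weakmonotone}, and concludes via the characterization in \cref{homotope}(\ref{ht}). Your extra care about inverting $\nu$, the polynomial-like-ray versus external-ray identification, and the density of the landing points is sound and only fills in details the paper leaves implicit.
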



\begin{proof}
Let us use the notation from \cref{l:weakmonotone}. Set $\Ac=\nu(\mathcal B)$. Then by
\cref{l:weakmonotone} the set $\Ac$ is dense in $\uc$. Moreover, by \cref{l:weakmonotone}
conditions listed in \cref{homotope}(2) are satisfied for $\Ac\subset \uc$
and $J^*\subset J(P)$. Hence $J^*$ is strategically placed in $J(P)$.
\end{proof}

\cref{l:pl-ray-corr} allows one to conclude that connected
polynomial-like Julia sets with non-degenerate finest locally connected
models force the existence of non-degenerate finest locally connected
models of containing them connected polynomial Julia sets.

\begin{thmb}
Suppose that $P:\C\to \C$ is a polynomial and  $J^*\subset J(P)$  is a
continuum which is a polynomial-like Julia set of $P^n$ for some $n>0$.
If $J^*$ has a non-degenerate finest locally connected model, then so
does $J(P)$.
\end{thmb}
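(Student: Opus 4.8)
The plan is to deduce Theorem B from Theorem A by verifying its hypotheses for the pair $J^*\subset J(P)$. Theorem A requires two things: that $J^*$ be strategically placed in $J(P)$, and that $J^*$ have a non-degenerate finest locally connected model. The second is given by assumption (and the Straightening Theorem guarantees that ``finest locally connected model'' is a meaningful notion for the polynomial-like Julia set $J^*$, since $J^*$ is the Julia set of a genuine polynomial up to hybrid equivalence, and in particular is an unshielded continuum in its own right). So the only work is the first hypothesis.

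For that I would invoke \cref{l:pl-ray-corr} directly: it states precisely that if $J(P)$ is connected and contains a subcontinuum $J^*$ with $P^n|_{J^*}$ polynomial-like, then $J^*$ is strategically placed in $J(P)$. One small preliminary point to address is connectivity: Theorem B as stated does not explicitly assume $J(P)$ is connected, but since $J^*\subset J(P)$ is a continuum which is itself a (connected) polynomial-like Julia set, and since the interesting content is the connected case, I would either note that we may assume $J(P)$ is connected (the disconnected case being deferred to Subsection~\ref{ss:non-conn}) or observe that $J^*$ being a nondegenerate continuum inside $J(P)$ with the polynomial-like structure already forces us into the setting where \cref{l:pl-ray-corr} applies. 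With connectivity in hand, \cref{l:pl-ray-corr} gives the strategic placement, and \cref{homotope} (already used inside the proof of that corollary) reconciles the ray-correspondence language of \cref{l:weakmonotone} with the anchor-set formulation of \cref{d:strapla}.

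Having both hypotheses of Theorem A verified, I would conclude: $J(P)$ is an unshielded plane continuum (this is recorded in the text, since $J(P)=\partial U^\infty_{J(P)}$), $J^*$ is strategically placed in it, and $J^*$ has a non-degenerate finest locally connected model; therefore by Theorem A, $J(P)$ has a non-degenerate finest locally connected model. That is the entire argument.

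I do not expect a genuine obstacle here — the theorem is essentially a packaging of Theorem A together with \cref{l:pl-ray-corr}, whose own proof (translating \cref{l:weakmonotone} into the strategic-placement definition via \cref{homotope}) is where the real content lies. The only thing demanding a moment's care is making the logical dependence clean: confirming that the polynomial-like ray terminology (the arcs $R_{J^*}(\al)$ coming from external rays of the straightened polynomial $Q$) behaves, for the purposes of \cref{d:strapla} and \cref{homotope}, exactly like honest external rays of an unshielded continuum — but this is exactly what the discussion preceding \cref{l:weakmonotone} and the proof of \cref{l:pl-ray-corr} already establish, so in the write-up it suffices to cite them.
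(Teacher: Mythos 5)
Your proposal is correct and follows exactly the paper's own route: the published proof of Theorem~B is the one-line deduction ``by \cref{l:pl-ray-corr}, Theorem A implies the desired,'' which is precisely your argument of verifying strategic placement via \cref{l:pl-ray-corr} and then applying Theorem~A. Your additional remarks on connectivity and on $J(P)$ being unshielded are sensible housekeeping that the paper leaves implicit.
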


\begin{proof}
Indeed, by \cref{l:pl-ray-corr} Theorem A implies the desired.
\end{proof}

Note that if $K^*$ is a filled polynomial-like Julia set of a polynomial $P$,
then $K^*$ is a component of $P^{-n}(K^*)$. As it turns out this is
almost sufficient (the proof of \cref{t:poly-like} uses some ideas
communicated by M. Lyubich to the third named author). For convenience we state
these results in the case that $n=1$.

\begin{thm}[Theorem B \cite{bopt15}]\label{t:poly-like}
  Let $P:\C\to\C$ be a polynomial, and $Y\subset\C$ be a full
  $P$-invariant continuum. The following assertions are equivalent:
\begin{enumerate}
 \item the set $Y$ is the filled Julia set of some polynomial-like
     map $P:U^*\to V^*$ of degree $k$,
 \item $Y$ is a component of the set $P^{-1}(P(Y))$, and, for every
     attracting or parabolic point $y$ of $P$ in $Y$, the immediate
     attracting basin of $y$ or the union of all parabolic domains
     at $y$ is a subset of $Y$.
\end{enumerate}
\end{thm}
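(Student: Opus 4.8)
The plan is to prove the two implications of \cref{t:poly-like} separately, noting at the outset that in either case $P(Y)=Y$: this holds for a polynomial-like filled Julia set because $f(K_f)=K_f$, and in the converse direction it will drop out of the construction. So below I write $P(Y)=Y$ and read $P^{-1}(P(Y))$ as $P^{-1}(Y)$.

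For the implication $(1)\Rightarrow(2)$, write $Y=K_f$ for a polynomial-like $f=P\colon U^*\to V^*$ of degree $k$. First I would check that $P^{-1}(Y)\cap U^*=Y$: if $z\in U^*$ and $P(z)=f(z)\in K_f$, then $f^{m}(f(z))\in U^*$ for all $m\ge 0$, hence $f^{m+1}(z)\in U^*$, and since also $z\in U^*$ we get $z\in K_f=Y$. Thus $Y$ is relatively closed (being a continuum) and relatively open (being $P^{-1}(Y)\cap U^*$ with $U^*$ open) in $P^{-1}(Y)$, and since it is connected it is a component of $P^{-1}(Y)=P^{-1}(P(Y))$. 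For the second clause, let $y\in Y$ be attracting or parabolic for $P$; since $P$ and $f$ agree near $Y$, $y$ is attracting (resp.\ parabolic) for $f$, and the immediate basin of $y$ for $f$ (resp.\ the union of parabolic domains at $y$) is a bounded Fatou component of $f$, hence contained in $K_f=Y$. It remains to identify this with the corresponding set for $P$; this follows by choosing a small linearizing (resp.\ attracting-petal) disk $B\subset U^*$ at $y$ and observing that the nested $f$-preimage components of $B$ stay inside $U^*$, so the relevant set for $P$ coincides with the one for $f$ and is contained in $Y$.

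For the implication $(2)\Rightarrow(1)$, I would carry out a Douady--Hubbard style construction of nested topological disks. Since $Y$ is full, $\widehat\C\sm Y$ is connected and $Y$ has a neighborhood basis of Jordan domains; since $Y$ is a component of the compact set $P^{-1}(Y)$, I can choose a Jordan domain $W\supset Y$ with $\ol W\cap P^{-1}(Y)=Y$, separating $Y$ from the other components of $P^{-1}(Y)$. The goal is to produce a Jordan domain $V^*$ with $Y\subset V^*\subseteq W$ such that the component $U^*$ of $P^{-1}(V^*)$ containing $Y$ satisfies $\ol{U^*}\subseteq V^*$. Granting this, $P\colon U^*\to V^*$ is automatically proper, and a bounded component of the preimage of a simply connected domain under a polynomial is simply connected, so $U^*$ and $V^*$ are topological disks and $(U^*,V^*,P)$ is polynomial-like; its degree $k$ equals $1$ plus the sum of the multiplicities of the critical points of $P$ in $U^*$ by Riemann--Hurwitz, a finite number. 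Finally $K_{P|_{U^*}}=\bigcap_n (P|_{U^*})^{-n}(U^*)=Y$: the inclusion $\supseteq$ is immediate from $P(Y)=Y\subset U^*$, and $\subseteq$ follows because, $V^*$ being a thin neighborhood of $Y$ with $\ol{U^*}\cap P^{-1}(Y)\subseteq Y$, the nested intersection of preimages collapses onto $Y$.

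The main obstacle is the construction of $V^*$ with $\ol{U^*}\subseteq V^*$, and this is exactly where both halves of hypothesis (2) are used. An arbitrary small Jordan neighborhood $V^*$ of $Y$ need not work, because the preimage component $U^*$ may overflow $V^*$ in one of two ways: it may ``leak'' toward another component of $P^{-1}(Y)$, which is prevented by the component hypothesis together with the choice of $W$; or it may accumulate on $\partial V^*$ along a bounded Fatou component of $P$ attached to an attracting or parabolic cycle meeting $Y$ whose immediate basin is only partly inside $Y$, which is precisely the situation ruled out by the basin hypothesis. Concretely, I expect to build $V^*$ by starting from $W$, passing to the $Y$-component of its preimage, and, where necessary, thickening across the finitely many attracting or parabolic basins of $P$ that touch $Y$, using that these basins lie in $Y$; one then checks, using that $P$ is expanding near $\infty$ and that $Y$ is forward invariant and compact, that after finitely many such adjustments the preimage component becomes compactly contained. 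Pinning down this adjustment step cleanly --- in particular at parabolic points, where the relevant domains are not components of $P^{-n}$ of a disk --- is the technical heart of the argument, and is presumably where the ideas attributed to M.~Lyubich enter.
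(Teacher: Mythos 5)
First, a point of comparison: the paper does not prove Theorem~\ref{t:poly-like} at all --- it is imported verbatim from \cite{bopt15}, with only the remark that the proof there uses ideas of Lyubich. So there is no in-paper argument to measure your proposal against, and it has to stand on its own. Your direction $(1)\Rightarrow(2)$ does stand: the clopen-in-$P^{-1}(Y)$ argument for the component statement is correct, and the basin statement closes up as you indicate --- a small linearizing disk or attracting petal $D\subset U^*$ lies in $K_f=Y$, and the increasing components of $P^{-n}(D)$ at $y$ stay in $Y$ by induction, using that $Y$ is a component of $P^{-n}(Y)$ for every $n$.

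The direction $(2)\Rightarrow(1)$, however, is a plan rather than a proof, and what is deferred is the entire mathematical content of the implication. Everything you actually verify --- properness of $P$ on a preimage component, simple connectivity of $U^*$ via Riemann--Hurwitz, finiteness of the degree --- is routine \emph{once} a Jordan domain $V^*\supset Y$ with $\ol{U^*}\subset V^*$ has been produced; producing it is exactly the step you leave as ``I expect to build $V^*$ by \dots thickening across the finitely many attracting or parabolic basins,'' and you say yourself that pinning this down is the technical heart. In particular, at a parabolic point of $\partial Y$ the preimage component of any plain Jordan neighborhood bulges outward along the attracting directions, and the required Douady--Hubbard-type thickening through petals must be shown to (a) remain mapped correctly under the relevant inverse branches and (b) not destroy the separation from the other components of $P^{-1}(Y)$ furnished by $W$; none of this is carried out. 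There is also a second, smaller gap at the end: the inclusion $K_{P|_{U^*}}\subseteq Y$ does not follow from ``$V^*$ is a thin neighborhood.'' A priori $\bigcap_n (P|_{U^*})^{-n}(U^*)$ could contain points of $K(P)\sm Y$ whose whole orbit happens to stay inside the thin neighborhood (if $K_f$ were disconnected, $Y$ would be a proper subcontinuum of it), and excluding this again requires an argument using the component hypothesis for all iterates rather than a one-line appeal to thinness. So the proposal correctly identifies where both halves of hypothesis (2) must enter, but the hard implication remains unproved.
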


The following corollary is now almost immediate.

\begin{cor}
Suppose that $K^*\subset K$ is a subcontinuum of the filled Julia set
of a polynomial $P$ such that $K^*$ is a component of $P^{-1}\circ P
(K^*)$ containing all immediate parabolic and attracting basins of all
attracting and parabolic points in $K^*$. Then if
$\partial(U^\infty_{K^*})$ has a non-degenerate finest locally
connected model, then $J(P)$ has a non-degenerate finest locally
connected model.
\end{cor}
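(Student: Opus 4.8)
The plan is to deduce this corollary directly from \cref{t:poly-like} and Theorem B, with the only extra work being a careful bookkeeping of filled versus unfilled Julia sets. First I would apply \cref{t:poly-like} with $Y=K^*$: by hypothesis $K^*$ is a full $P$-invariant continuum (invariance follows since $K^*$ is a component of $P^{-1}(P(K^*))$ and $P(K^*)$ is contained in the filled Julia set, hence $P(K^*)=K^*$ after noting $K^*$ meets its own image), $K^*$ is a component of $P^{-1}(P(K^*))$, and $K^*$ contains the immediate attracting basin of each attracting point of $P$ in $K^*$ as well as the union of all parabolic domains at each parabolic point in $K^*$. Condition (2) of \cref{t:poly-like} is therefore satisfied, so condition (1) holds: $K^*$ is the filled Julia set of a polynomial-like map $P:U^*\to V^*$ of some degree $k$. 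Consequently $J^*:=\partial(U^\infty_{K^*})$ is precisely the Julia set of that polynomial-like map, and $J^*\subset J(P)$ since $K^*\subset K(P)$ implies $U^\infty_{K(P)}\subset U^\infty_{K^*}$, so $\partial(U^\infty_{K^*})\subset \partial(U^\infty_{K(P)})=J(P)$.

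Next I would observe that the hypothesis ``$\partial(U^\infty_{K^*})$ has a non-degenerate finest locally connected model'' is exactly the statement that $J^*$ has a non-degenerate finest locally connected model, since $\partial(U^\infty_{K^*})=J^*$ by the identification in the previous paragraph. We are therefore in the situation of Theorem B, stated for $n=1$: $J^*\subset J(P)$ is a continuum which is a polynomial-like Julia set of $P=P^1$, and $J^*$ has a non-degenerate finest locally connected model. Theorem B then yields that $J(P)$ has a non-degenerate finest locally connected model, which is the conclusion.

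The one point requiring a little care — the main (mild) obstacle — is matching vocabulary: the corollary is phrased in terms of the \emph{filled} set $K^*$ and its outer boundary $\partial(U^\infty_{K^*})$, whereas Theorem B and \cref{l:pl-ray-corr} are phrased in terms of the \emph{Julia set} $J^*$ of the polynomial-like map. One must verify that the polynomial-like Julia set produced by \cref{t:poly-like} is indeed $\partial(U^\infty_{K^*})$ and not some larger or smaller set: by \cref{d:plj} the Julia set of a polynomial-like map is the topological boundary of its filled Julia set, and for the filled set $K^*$ (which is full, i.e.\ equals its own topological hull) the topological boundary coincides with $\partial(U^\infty_{K^*})$ because $K^*$ has no bounded complementary components. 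Once this identification is in hand the corollary follows ``almost immediately'' as claimed, with no further estimates needed.

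\begin{proof}
Apply \cref{t:poly-like} with $Y=K^*$. By assumption $K^*$ is a subcontinuum of the filled Julia set $K=K(P)$ which is a component of $P^{-1}(P(K^*))$; since $P(K^*)\subset K(P)$ and $K^*$ is full, $K^*$ is a full $P$-invariant continuum. By assumption $K^*$ contains all immediate parabolic and attracting basins of attracting and parabolic points of $P$ lying in $K^*$, so condition (2) of \cref{t:poly-like} holds. Hence condition (1) holds: there is a polynomial-like map $P:U^*\to V^*$ of some degree $k$ with filled Julia set $K^*$. By \cref{d:plj} its Julia set is $\partial K^*$, and since $K^*$ is full, $\partial K^*=\partial(U^\infty_{K^*})$. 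Denote this continuum by $J^*$; then $J^*\subset J(P)$ because $K^*\subset K(P)$ forces $U^\infty_{K(P)}\subset U^\infty_{K^*}$ and hence $\partial(U^\infty_{K^*})\subset\partial(U^\infty_{K(P)})=J(P)$.

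The hypothesis that $\partial(U^\infty_{K^*})$ has a non-degenerate finest locally connected model says precisely that $J^*$ has a non-degenerate finest locally connected model. Since $J^*\subset J(P)$ is a continuum which is the polynomial-like Julia set of $P=P^1$, Theorem B applies and yields that $J(P)$ has a non-degenerate finest locally connected model.
\end{proof}
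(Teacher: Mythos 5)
Your proposal follows exactly the paper's route: the paper's entire proof is ``By \cite{bopt15}, $P|_{K^*}:K^*\to K^*$ is a polynomial-like map. Hence the result follows from Theorem~B.'' You are doing the same thing, just spelling out the bookkeeping (that \cref{t:poly-like}(2) holds, that the resulting polynomial-like Julia set is $\partial K^*=\partial(U^\infty_{K^*})$ because $K^*$ is full, and that Theorem~B applies with $n=1$), which is a reasonable expansion of an ``almost immediate'' corollary.

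One sub-step is justified incorrectly, though the conclusion it reaches is true. You derive $J^*\subset J(P)$ from ``$K^*\subset K(P)$ forces $U^\infty_{K(P)}\subset U^\infty_{K^*}$ and hence $\partial(U^\infty_{K^*})\subset\partial(U^\infty_{K(P)})$.'' The first inclusion is fine, but the second does not follow from it: for nested compacta $A\subset B$ one does \emph{not} get $\partial(U^\infty_A)\subset\partial(U^\infty_B)$ (take $A$ a small disk strictly inside a large disk $B$; then $\partial(U^\infty_A)$ is the small circle, disjoint from $\partial(U^\infty_B)$). A point of $\partial(U^\infty_{K^*})$ is a limit of points outside $K^*$, but those points could a priori lie in the interior of $K(P)$. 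The correct justification is dynamical, not topological: the Julia set of a polynomial-like restriction of $P$ is the closure of its repelling periodic points, which are repelling periodic points of $P$ and hence lie in $J(P)$ by \cref{d:julia}. With that replacement the proof is complete. (A smaller remark: your derivation of $P$-invariance of $K^*$ ``after noting $K^*$ meets its own image'' asserts something not in the hypotheses; but the paper's own statement of the corollary is equally silent on the fullness and invariance required by \cref{t:poly-like}, so this is at worst inherited imprecision rather than a flaw of your argument.)
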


\begin{proof}
By \cite{bopt15}, $P|_{K^*}:K^*\to K^*$ is a polynomial-like map. Hence
the result follows from Theorem B.
\end{proof}

\subsection{Models for non-connected spaces}\label{ss:non-conn}

Models for non-connected spaces  were studied in \cite{bco13}. A
\emph{compactum} is a compact metric space. Since a compactum with
infinitely many distinct components  is always not locally connected at
some point, we need to replace the condition of local connectedness of
the model by a suitable notion.

A compactum X is called \emph{finitely Suslinian} if, for every $\e >
0$, every collection of disjoint subcontinua of X with diameters at
least $\e$ is finite.  By Lemma 2.9 [BO04], unshielded planar locally
connected continua are finitely Suslinian and vice versa. Thus, in the
unshielded case,  the notion of finitely Suslinian generalizes the
notion of local connectivity. This motivates us to look for good
finitely Suslinian models of planar compacta.

\begin{defn}
Let $X$  be a compactum. A \emph{finest finitely Suslinian model} for
$X$ is a finitely Suslinian compactum $S$ and a monotone map $m:X\to S$
so that for each monotone map $f:X\to Y$ to a finitely Suslinian
compactum $Y$ there exists a monotone map $g:S\to Y$ with $g\circ m=f$.
Then the map $m:X\to S$ is called a \emph{finest finitely Suslinian
model map}. We say that a compactum $X$ has a \emph{non-degenerate
finitely Suslinian model} $S$ if at least one component of $S$ is
non-degenerate.
\end{defn}

Observe that by definition of a monotone map it follows that if $m$ is
monotone then distinct components of $X$ map to distinct components of
$m(X)$. Observe also that the above introduced notion of a degenerate
finitely Suslinian model agrees with the notion of a degenerate locally
connected model in the case of continua.

By \cite{bco13} all finest finitely Suslinian models of a compactum $X$
are homeomorphic and we can talk about \emph{the} finest finitely
Suslinian models of compacta. It was shown in \cite{bco13} that every
planar unshielded compactum $X$ has a  finest finitely Suslinian model
$S$ (which is unique up to homeomorphisms). As previously in the case
of continua, the finest finitely
Suslinian model $S$ of $X$ may be degenerate (i.e., the finest finitely
Suslinian model monotone map $m:X\to S$ may well collapse \emph{all}
components of $X$ to points). The following theorem is the main result
of \cite{bco13} concerning finest finitely Suslinian models of
polynomial Julia sets (this time including disconnected Julia sets).


\begin{thm}[Theorem 6 \cite{bco13}]\label{t:fin-sus}
The finest finitely Sus\-li\-ni\-an mo\-del monotone map $m:
J(P)\to S$ of the Julia set $J(P)$ of a polynomial $P$ coincides on
each component $X$ of $J(P)$ with the finest monotone map $m_X$ of $X$
to a locally connected continuum. In particular, the following holds:

\begin{enumerate}

\item the finest finitely Suslinian monotone model of $J(P)$ is
    non-degenerate if and only if there exists a periodic component
    of $J(P)$ whose finest finitely Suslinian monotone model is
    non-degenerate;

\item the Julia set $J(P)$ is finitely Suslinian if and only if all
    periodic non-degenerate components of $J(P)$ are locally
    connected.

\end{enumerate}

\end{thm}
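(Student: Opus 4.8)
The plan is to deduce this (which is Theorem~6 of \cite{bco13}) from the universal property of the finest finitely Suslinian model together with \cref{t:models} and \cref{l:fib-in-imp}. The main assertion — that $m$ restricts on each component to the finest monotone map onto a locally connected continuum — is the heart of the matter, and the two itemized statements follow from it by soft arguments plus the dynamics of $P$ on the components of $J(P)$.

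First I would reduce to a single component. By \cite{bco13} there is a finest finitely Suslinian model $m\colon J(P)\to S$ with $S$ a planar finitely Suslinian compactum. Since $m$ is monotone it sends distinct components to distinct components, so (being also onto) $m$ induces a bijection between the components of $J(P)$ and those of $S$; write $S_X=m(X)$ for a component $X$ of $J(P)$. If $s\in S_X$ then $m^{-1}(s)$ is connected, hence lies in one component $X'$, and $m(X')\cap S_X\neq\emptyset$ forces $X'=X$; thus $m^{-1}(s)\subset X$ and $m|_X\colon X\to S_X$ is a monotone surjection onto the subcontinuum $S_X$ of $S$. As a subcontinuum of a finitely Suslinian compactum, $S_X$ is itself a finitely Suslinian continuum, hence locally connected (cf.\ Lemma~2.9 of [BO04]). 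Letting $m_X\colon X\to L_X$ be the finest monotone map of $X$ onto a locally connected continuum (\cref{t:models}), its universal property yields a monotone surjection $g_X\colon L_X\to S_X$ with $g_X\circ m_X=m|_X$. The whole main assertion reduces to showing that each $g_X$ is injective; then $g_X$ is a homeomorphism of compacta and $m|_X$ is, up to homeomorphism, $m_X$.

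For injectivity I would use the construction of $S$ itself. Form the decomposition $\mathcal D$ of $J(P)$ whose nondegenerate elements are the fibers of $m_X$, as $X$ runs over all components (degenerate components contribute points). The key point — and this is precisely what the construction in \cite{bco13} does — is that $\mathcal D$ is a monotone upper semicontinuous decomposition whose quotient $T$ is again finitely Suslinian; upper semicontinuity rests on the fact that fibers of $m_X$ are unions of $X$-impressions (\cref{l:fib-in-imp}) and on the upper semicontinuity of impressions, together with the unshielded structure of $J(P)$. I expect this verification to be the genuine obstacle, and would import it from \cite{bco13}. Granting it, one gets a monotone surjection $\mu\colon J(P)\to T$ whose restriction to any component $X$ has exactly the $m_X$-fibers, so $\mu|_X$ equals $m_X$ up to homeomorphism. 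Since $m$ is the \emph{finest} finitely Suslinian model, $\mu=h\circ m$ for some monotone $h\colon S\to T$; restricting to $X$ and using $m|_X=g_X\circ m_X$ gives $(h|_{S_X})\circ g_X\circ m_X=\mu|_X$, and as $m_X$ is onto, $(h|_{S_X})\circ g_X$ is forced to be a homeomorphism of $L_X$ onto $\mu(X)$ (both $\mu|_X$ and $m_X$ being the quotient maps of the same fiber decomposition of $X$). In particular $g_X$ is injective, completing the main assertion.

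The two conclusions then follow. For (1): if a periodic component $X$ has non-degenerate $L_X$, then $L_X=S_X$ is a non-degenerate component of $S$, giving $\Leftarrow$. For $\Rightarrow$, if $S$ is non-degenerate then $L_X=S_X$ is non-degenerate for some component $X$; the family $\mathcal N$ of components of $J(P)$ with non-degenerate finest locally connected model is forward $P$-invariant (by the main assertion $\mathcal N$ corresponds to the non-degenerate components of $S$, and $P$ carries a non-degenerate continuum onto a non-degenerate one), so it remains to locate a periodic member of $\mathcal N$; here I would invoke the structure theory of Julia-set components of \cite{bco13} (and, for the (pre)periodic case, follow the orbit of $X$ into its terminal cycle and apply forward invariance). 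For (2): $J(P)$ is finitely Suslinian if and only if $m$ is a homeomorphism, if and only if every $m|_X$ is a homeomorphism; since $L_X$ is always locally connected, $m_X$ is a homeomorphism exactly when $X$ is locally connected, so this is the statement that every non-degenerate component of $J(P)$ is locally connected, and conversely if every component is locally connected then $J(P)$ is finitely Suslinian because a disjoint $\varepsilon$-family of its subcontinua meets only finitely many components and is finite inside each. The reduction of ``all non-degenerate components'' to ``all periodic non-degenerate components'' again uses that the forward $P$-image of a non-locally-connected component is non-locally-connected, the non-(pre)periodic case being handled by the results on Julia components in \cite{bco13}.
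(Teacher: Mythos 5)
First, a point of reference: the paper does not prove this statement at all --- it is quoted verbatim as Theorem~6 of \cite{bco13} and used as a black box --- so there is no in-paper argument to measure your proposal against. Judged on its own terms, your write-up is a sensible outline of how such a proof must go, but it is not a proof. The easy half (factoring $m|_X$ through $m_X$ via the universal property from \cref{t:models}, after checking that $m$ induces a bijection of components and that $S_X=m(X)$ is a finitely Suslinian, hence locally connected, continuum) is fine. The entire content of the theorem, however, sits in the step you flag and then defer: that the decomposition of $J(P)$ into the $m_X$-fibers of all components is upper semicontinuous and has a finitely Suslinian quotient $T$. This is not a routine consequence of \cref{l:fib-in-imp}: within a single component the upper semicontinuity of impressions helps, but across the (typically uncountably many) components one must rule out infinitely many pairwise disjoint quotient continua of diameter at least $\e$, and for that one genuinely needs the Branner--Hubbard/McMullen structure theory of Julia components (wandering components are singletons; the non-degenerate periodic components form finitely many cycles, each carrying a polynomial-like restriction as in \cite{bh88}). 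Importing exactly this verification from \cite{bco13} is circular here, since the statement being proved \emph{is} Theorem~6 of \cite{bco13}.

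Second, in part (1) your forward-invariance argument for the family $\mathcal N$ of components with non-degenerate finest locally connected model is wrong as stated: ``$P$ carries a non-degenerate continuum onto a non-degenerate one'' concerns the components $X$ and $P(X)$ themselves, not their models $L_X$ and $L_{P(X)}$ --- a non-degenerate component can perfectly well have a degenerate model (cf.\ \cref{t:cremer}). What is actually needed is that the finest model map semiconjugates $P|_{J(P)}$ to a finite-to-one map of $S$, from which ``$m(X)$ degenerate if and only if $m(P(X))$ degenerate'' follows in both directions; that semiconjugacy is again a substantive result of \cite{bco11,bco13}, not something you have established. In sum, the proposal correctly identifies the architecture of the proof but leaves its load-bearing steps to the very reference whose theorem is being proved.
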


Hence, the following theorem immediately follows.

\begin{thm}
Suppose that $J$ is the Julia set of a polynomial $P$ and
$J^*\subset J$ is a subcontinuum so that, for some integer $r$,
$P^r|_{J^*}:J^*\to J^*$ is a polynomial-like map and $J^*$ has a
non-degenerate finest locally connected model. Then $J$ has the finest
finitely Suslinian model.
\end{thm}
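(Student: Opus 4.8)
The plan is to reduce the statement to Theorem~A and \cref{t:fin-sus}; recall that existence of a finest finitely Suslinian model for the planar unshielded compactum $J=J(P)$ is automatic by \cite{bco13}, so the real content is its non-degeneracy. Since $J^*$ is a continuum it lies in a single component $J_0$ of $J$. First I would check that $J_0$ is a periodic component: as $J^*$ is the Julia set of the polynomial-like map $P^r|_{J^*}$ it is $P^r$-invariant, and since $P^r$ sends each component of $J$ onto a component of $J$, the component $P^r(J_0)$ contains $P^r(J^*)=J^*$, hence $P^r(J_0)=J_0$. Next I would verify that $J_0$ is an unshielded plane continuum: if $x\in J_0$ then $x\in\partial U^\infty_J$ (as $J$ is unshielded), and $U^\infty_J\subset U^\infty_{J_0}$ gives $x\in\overline{U^\infty_{J_0}}$; since no point of a bounded complementary domain of $J_0$ lies in $\overline{U^\infty_{J_0}}$, it follows that $\partial U^\infty_{J_0}=J_0$.

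The heart of the matter is to show that $J^*$ is strategically placed in $J_0$ --- this is precisely \cref{l:pl-ray-corr}, save that there the ambient continuum is a connected polynomial Julia set rather than a component of a disconnected one. I would reexamine the proofs of \cref{l:weakmonotone} and \cref{l:pl-ray-corr} and note that the polynomial $P$ enters only through two facts, each surviving the replacement of $J$ by $J_0$. The first is the existence of one external ray of the ambient continuum landing on a strictly preperiodic point of $P$ lying in $J^*$: such points are dense in $J^*$, and a repelling periodic point $q$ of $P^r|_{J^*}$ in $J^*$ (hence of $P$) is the landing point of a periodic external ray of $J$ (the periodic ray landing theorem, valid for arbitrary polynomials, cf.\ \cite{mil06}); that ray lies in $U^\infty_J\subset U^\infty_{J_0}$, so $q$, and likewise its strictly preperiodic $P^r|_{J^*}$-preimages, are accessible from $U^\infty_{J_0}$ and thus are landing points of external rays of $J_0$. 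The second fact is that the germ near $J^*$ of such a ray of $J_0$ can be pulled back under the relevant local branches of $P^r$ to yield germs of external rays of $J_0$ at the $P^r|_{J^*}$-preimages (dense in $J^*$) of its landing point. With these two inputs the extendable-monotonicity and pullback-density arguments of \cref{l:weakmonotone} run verbatim, producing a dense anchor set $\mathcal A\subset\uc$ and an external connecting function $p$ exhibiting $J^*$ as strategically placed in $J_0$.

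With $J^*$ strategically placed in $J_0$, Theorem~A applied to $X=J_0$, $Y=J^*$ gives that $J_0$ has a non-degenerate finest locally connected model. Finally I would invoke \cref{t:fin-sus}: its first assertion identifies the restriction of the finest finitely Suslinian model map of $J$ to the periodic component $J_0$ with the finest monotone map of $J_0$ onto its finest locally connected model, which we have just shown to be non-degenerate; hence by \cref{t:fin-sus}(1) the finest finitely Suslinian model of $J$ is non-degenerate, as asserted.

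I expect the main obstacle to be the second step --- certifying that the proofs of \cref{l:weakmonotone} and \cref{l:pl-ray-corr}, written for connected polynomial Julia sets, remain valid over a periodic component $J_0$ of a disconnected Julia set. The delicate points are that $J_0$ need not itself be a polynomial-like Julia set (so one cannot simply quote a landing theorem internal to $J_0$) and that $P^r$ need not map $U^\infty_{J_0}$ into itself globally; the resolution in both cases is to argue with germs of external rays near $J^*$ only, and to import accessibility of (pre)periodic points of $J^*$ from the ambient disconnected Julia set $J$, where the landing theorem for periodic rays is available.
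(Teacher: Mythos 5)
Your overall skeleton matches the paper's: locate the periodic component $J_0$ containing $J^*$, show that $J_0$ has a non-degenerate finest locally connected model, and conclude via \cref{t:fin-sus}(1). But the middle step is handled quite differently. The paper does not re-examine \cref{l:weakmonotone} or \cref{l:pl-ray-corr} at all: it invokes the Branner--Hubbard theorem \cite{bh88} that $P^n|_{J_0}$ (where $n$ is the period of the component) is itself a polynomial-like map, straightens it to a polynomial $g$ with \emph{connected} Julia set, transports $J^*$ to a polynomial-like Julia set of an iterate of $g$ inside $J(g)$, and then quotes Theorem~B verbatim in the connected setting. This short-circuits exactly the issue you flag as your ``main obstacle'': after straightening there is no disconnected ambient Julia set anywhere, so the landing and pullback arguments of \cref{l:weakmonotone} are used only in the form already proved. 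Your route instead works intrinsically in $J_0$, establishing strategic placement of $J^*$ in $J_0$ by importing accessibility from $U^\infty_J\subset U^\infty_{J_0}$ and arguing with germs of rays near $J^*$; this is workable (your diagnosis of the two delicate points --- that external rays of $J_0$ are not $P^r$-invariant and that $J_0$ has no internal landing theorem --- and your proposed fixes are the right ones), but it amounts to reproving an extension of \cref{l:weakmonotone} rather than the one-line citation the paper gets away with. What your approach buys is independence from \cite{bh88} and from the Straightening Theorem at the level of the whole component; what the paper's approach buys is brevity and the ability to reuse Theorem~B as a black box. One small caution: your phrase that the arguments ``run verbatim'' overstates the case slightly, since the pullbacks of external rays of $J_0$ land in $U^\infty_J$ and must then be converted back into external rays of $J_0$ via accessibility and the homotopy criterion of \cref{homotope}(2); you do address this in your closing paragraph, so I read it as a presentational rather than a mathematical gap.
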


\begin{proof}
Suppose that $K^*$ is contained in the component $C$ of $J$. Then $C$
must be periodic of some period $n$. By a result of \cite{bh88},
$P^n|_C:C\to C$ is a polynomial-like map. Hence $P^n|_C$ is hybrid
equivalent to a polynomial $g$. Since $J^*\subset C$ it follows from
Theorem~B that $C$ has a non-degenerate finest locally connected model.
Hence,  by Theorem~\ref{t:fin-sus}, $J$ has the finest finitely
Suslinian model.
\end{proof}

\bibliographystyle{amsalpha}
\providecommand{\bysame}{\leavevmode\hbox to3em{\hrulefill}\thinspace}
\providecommand{\MR}{\relax\ifhmode\unskip\space\fi MR }
\providecommand{\MRhref}[2]{%
  \href{http://www.ams.org/mathscinet-getitem?mr=#1}{#2}
} \providecommand{\href}[2]{#2}

\end{document}